\definecolor{darkblue}{rgb}{0,0,0.8}
\newcommand{\R}{\mathbb{R}}
\newcommand{\Gra}{\mathcal{G}}
\newcommand{\bx}{\mathbf{x}}
\newcommand{\by}{\mathbf{y}}
\newcommand{\barf}{f}
\newcommand{\df}{\nabla F}
\newcommand{\T}{\intercal}
\newcommand{\spa}[1]{\mathrm{span}\{#1\}}
\newcommand{\nul}[1]{\mathrm{null}\{#1\}}
\newtheorem{theorem}{Theorem}
\newtheorem{lemma}{Lemma}
\newtheorem{assumption}{Assumption}
\newtheorem{definition}{Definition}
\newtheorem{remark}{Remark}
\title{\LARGE \bf
	A Push-Pull Gradient Method for Distributed Optimization in Networks
}
\author{Shi Pu, Wei Shi, Jinming Xu, and Angelia Nedi{\'c}
	\thanks{*This work was supported in parts by the NSF grant CCF-1717391
		and by the ONR grant no.\ N00014-12-1-0998.}
	\thanks{Shi Pu, Wei Shi, Jinming Xu and Angelia Nedi{\'c} are with the School of Electrical, Computer, and Energy Engineering, Arizona State University,
		Tempe, AZ 85287, USA.
		{\tt\small (emails: shipu3@asu.edu, wshi36@asu.edu, jinming.xu@asu.edu, Angelia.Nedich@asu.edu)}}%
}
\newcommand{\mx}{\mathbf{x}}
\newcommand{\my}{\mathbf{y}}
\newcommand{\ox}{\bar{x}}
\newcommand{\oy}{\bar{y}}
\begin{document}

	\maketitle
	\thispagestyle{empty}
	\pagestyle{empty}

	\begin{abstract}
		In this paper, we focus on solving a distributed convex optimization problem in a network, where each agent has its own convex cost function and the goal is to minimize the sum of the agents' cost functions while obeying the network connectivity structure. In order to minimize the sum of the cost functions,
		we consider a new distributed gradient-based method where each node maintains two estimates, namely, an estimate of the optimal decision variable and an estimate of the gradient
		for the average of the agents' objective functions. 
		From the viewpoint of an agent, the information about the 
		decision variable is pushed to the neighbors, while the information about the gradients is pulled from 
		the neighbors (hence giving the name ``push-pull gradient method"). 
		The method unifies the algorithms with different types of distributed architecture, including decentralized (peer-to-peer), centralized (master-slave), and semi-centralized (leader-follower) architecture. We show that the algorithm converges linearly for strongly convex and smooth objective functions over a directed static network. 
		In our numerical test, the algorithm performs well even for time-varying directed networks.
	\end{abstract}
	{\color{red}This is a preliminary version of the paper \cite{pu2018push2}.}
	
	\section{Introduction}
	In this paper, we consider a system involving $n$ agents whose goal
	is to collaboratively solve the following problem:
	\begin{equation} \label{problem}
	\begin{array}{c}
	\min\limits_{x\in\R^p}~\barf(x):=\sum\limits_{i=1}^n f_i(x),
	\end{array}
	\end{equation}
	where $x$ is the global decision variable and each function $f_i: \R^p\rightarrow \R$ is convex and known by agent $i$ only.
	The agents are embedded in a communication network, and their
	goal is to obtain an optimal and consensual solution through local neighbor communications and information exchange. This local exchange is desirable in situations where privacy needs to be preserved, 
	or the exchange of a large amount of data is prohibitively expensive due to limited communication resources.
	
	To solve problem~\eqref{problem} in a networked system of $n$ agents, 
	many algorithms have been proposed under various assumptions on the objective functions 
	and the underlying network~\cite{boyd2011distributed,peng2016arock,Shi2014,Shi2015,Mokhtari2016,varagnolo2016newton,olshevsky2017linear,scaman2017optimal,uribe2017optimal,Xu2015,nedic2017geometrically,di2016next,Shi2015_2,li2017decentralized,nedic2015distributed,xi2015linear,Zeng2015,xu2017convergence,wu2016decentralized,pu2017flocking,pu2018distributed,nedic2017achieving,qu2017harnessing,xi2018linear,wei2013distributed,mokhtari2017network}. Centralized algorithms are discussed in~\cite{boyd2011distributed}, where extensive applications in learning can be found. Parallel, coordinated, and asynchronous algorithms are discussed in~\cite{peng2016arock} and the references therein.
	
	Our emphasis in the literature review is on the decentralized optimization 
	since our approach builds on a new understanding of the decentralized consensus-based methods 
	for directed communication networks. Most references, including \cite{Shi2014,Shi2015,Mokhtari2016,varagnolo2016newton,olshevsky2017linear,scaman2017optimal,uribe2017optimal,Xu2015,nedic2017geometrically,di2016next,Shi2015_2,li2017decentralized}, 
	often restrict the underlying network connectivity structure, 
	or more commonly require doubly stochastic mixing matrices.
	The work in~\cite{Shi2014} has been the first to demonstrate the linear convergence of an ADMM-based 
	decentralized optimization scheme. Reference~\cite{Shi2015} uses a gradient difference structure in the algorithm to provide the first-order decentralized optimization algorithm which is capable of achieving the typical convergence rates of a centralized gradient method, while 
	references~\cite{Mokhtari2016,varagnolo2016newton} deal with the second-order decentralized methods. 
	By using Nesterov's acceleration, reference~\cite{olshevsky2017linear} has obtained a method whose convergence time scales linearly in the number of agents $n$, which is the best scaling with $n$ currently known.
	More recently, for a class of so-termed dual friendly functions, 
	papers~\cite{scaman2017optimal,uribe2017optimal} have obtained an optimal decentralized consensus optimization algorithm whose dependency on the condition number\footnote{The condition number of a smooth and strongly convex function is the ratio of its gradient Lipschitz constant and its strong convexity constant.} $\kappa$ of the system's objective function $\sum_{i=1}^n f(x_i)$ achieves the best known scaling in the order of $O(\sqrt{\kappa})$.
	Work in~\cite{Shi2015_2,li2017decentralized} investigates proximal-gradient methods which can tackle \eqref{problem} with proximal friendly component functions. Paper~\cite{wu2016decentralized} extends
	the work in~\cite{Shi2014} to handle asynchrony and delays. 
	References~\cite{pu2017flocking,pu2018distributed} considers a stochastic variant of problem~\eqref{problem} 
	in asynchronous networks. A tracking technique has been recently employed to develop
	decentralized algorithms for tracking the average of the Hessian/gradient in second-order methods~\cite{varagnolo2016newton}, allowing uncoordinated step-size~\cite{Xu2015,nedic2017geometrically}, handling non-convexity \cite{di2016next}, and achieving linear convergence over 
	time-varying graphs~\cite{nedic2017achieving}.
	
	For directed graphs, to eliminate the need of constructing a doubly stochastic matrix in reaching consensus\footnote{Constructing a doubly stochastic matrix over a directed graph needs weight balancing which requires an independent iterative procedure across the network; consensus is a basic element in decentralized optimization.}, reference \cite{Kempe2003} proposes the push-sum protocol. Reference \cite{tsianos2012push} has been the first to propose a push-sum based distributed optimization algorithm for directed graphs. Then, based on the push-sum technique again, a decentralized subgradient method for time-varying directed graphs has been proposed and analyzed in~\cite{nedic2015distributed}. Aiming to improve convergence for a smooth objective function and a fixed directed graph, work in~\cite{xi2015linear,Zeng2015} modifies the algorithm from~\cite{Shi2015} with the push-sum technique, thus providing a new algorithm which converges linearly for a strongly convex objective function on a static graph. However, the algorithm has some stability issues, which have been resolved in~\cite{nedic2017achieving} in a more general setting of time-varying directed graphs.
	
	In this paper, we introduce a modified gradient-tracking algorithm for decentralized (consensus-based) optimization in directed graphs. Unlike the push-sum protocol, our algorithm uses a row stochastic matrix for the mixing of the decision variables, while it employs a column stochastic matrix for tracking the average gradients. 
	Although motivated by a fully decentralized scheme, we will show that our algorithm can work both in fully decentralized networks and in two-tier networks. The contributions of this paper include the design of a new decentralized algorithm\footnote{While completing the paper, we became aware of a recent work discussing an algorithm that is similar to ours \cite{xin2018linear}. We have independently arrived to our method and results.} and the establishment of its linear convergence for a static directed graph.
	We numerically evaluate our proposed algorithm for both static and time-varying graphs, and 
	find that the algorithm is competitive as compared to the linearly convergent algorithm developed in~\cite{nedic2017achieving}.
	
	The structure of this paper is as follows. We first provide notation and state basic assumptions in 
	Subsection~\ref{sec: problem}. Then, we introduce our algorithm in Section~\ref{sec: RC} along with the intuition of its design and some examples explaining how it relates to (semi-)centralized and decentralized optimization. 
	We establish the linear convergence of our algorithm in Section~\ref{sec: conv_analysis}, while
	in Section~\ref{sec: simulation} we conduct numerical test to verify our theoretical claims. 
	Concluding remarks are given in Section \ref{sec: conclusion}.
	
	
	\subsection{Notation and Assumptions}
	\label{sec: problem}
	Throughout the paper, vectors default to columns if not otherwise specified.
	Let each agent $i\in\{1,2,\ldots,n\}$ hold a local copy $x_i\in\mathbb{R}^p$ of the decision variable and an auxiliary variable $y_i\in\mathbb{R}^p$ tracking the average gradients, where their
	values at iteration $k$ are denoted by $x_{i,k}$ and $y_{i,k}$, respectively. Let
	\begin{eqnarray*}
		&\mx:=[x_1, x_2, \ldots,x_n]^{\T}\in\mathbb{R}^{n\times p},\\
		&\my:=[y_1, y_2, \ldots,y_n]^{\T}\in\mathbb{R}^{n\times p}.
	\end{eqnarray*}
	Define $F(\mx)$ to be an aggregate objective function of the local variables,
	i.e., $F(\mx):=\sum_{i=1}^nf_i(x_i)$, and write
	\begin{equation*}
	\nabla F(\mx):=\left[\nabla f_1(x_1), \nabla f_2(x_2), \ldots, \nabla f_n(x_n)\right]^{\T}\in\mathbb{R}^{n\times p}.
	\end{equation*}
	
	\begin{definition}\label{def: norm n p}
		Given an arbitrary vector norm $\|\cdot\|$ on $\mathbb{R}^n$, for any $\mx\in\mathbb{R}^{n\times p}$, we define
		\begin{equation*}
		\|\mx\|:=\left\|\left[\|\mx^{(1)}\|,\|\mx^{(2)}\|,\ldots,\|\mx^{(p)}\|\right]\right\|_2,
		\end{equation*}
		where $\mx^{(1)},\mx^{(2)},\ldots,\mx^{(p)}\in\mathbb{R}^n$ are columns of $\mx$, and $\|\cdot\|_2$ represents the $2$-norm.
	\end{definition}
	
	A directed graph is a pair $\mathcal{G}=(\mathcal{V},\mathcal{E})$, where $\mathcal{V}$ is the set of vertices (nodes) and the edge set $\mathcal{E}\subseteq \mathcal{V}\times \mathcal{V}$ consists of ordered pairs of vertices.	
	A directed tree is a directed graph where every vertex, except for the root, has only one parent. 
	A spanning tree of a directed graph is a directed tree that connects the root to all other vertices in the graph (see~\cite{godsil2013algebraic}).
	
	Given a nonnegative matrix $M\in\mathbb{R}^{n\times n}$, the directed 
	graph induced by the matrix $M$ is 
	denoted by $\mathcal{G}_M=(\mathcal{V}_M,\mathcal{E}_M)$, where 
	$\mathcal{V}_M=\{1,2,\ldots,n\}$ and $(j,i)\in\mathcal{E}_M$ iff $M_{ij}>0$. We let
	$\mathcal{R}_M$ denote the set of roots of all directed spanning trees in the graph $\mathcal{G}_M$.
	
	We use the following assumption on the functions $f_i$ in~\eqref{problem}.
	\begin{assumption}
		\label{asp; strconvex Lipschitz}
		Each $f_i$ is $\mu$-strongly convex and its gradient is $L$-Lipschitz continuous, i.e., for any $x,x'\in\mathbb{R}^p$,
		\begin{equation}
		\begin{split}
		& \langle \nabla f_i(x)-\nabla f_i(x'),x-x'\rangle\ge \mu\|x-x'\|^2,\\
		& \|\nabla f_i(x)-\nabla f_i(x')\|\le L \|x-x'\|.
		\end{split}
		\end{equation}
	\end{assumption}
	Under Assumption \ref{asp; strconvex Lipschitz}, there exists a unique optimal 
	solution $x^*\in\mathbb{R}^{1\times p}$ to problem~\eqref{problem}.	
	
	\section{A Push-Pull Gradient Method}
	\label{sec: RC}
	The aggregated form of the proposed algorithm, termed push-pull gradient method (Push-Pull), works as follows:
	Initialize with any $\mx_0$ and $\my_0=\df(\mx_0)$, and update according to the following rule for $k\ge0$,
	\begin{subequations}\label{algorithm RC}
		\begin{align}
		&\mx_{k+1} =  R(\mx_k-\alpha \my_k),\label{eq:x-update}\\
		&\my_{k+1} =  C\left(\my_k+\nabla F(\mx_{k+1})-\nabla F(\mx_k)\right),\label{eq:y-update}
		\end{align}
	\end{subequations}
	where $R,C\in\mathbb{R}^{n\times n}$.
	We make the following standard assumption on the matrices $R$ and $C$.
	\begin{assumption}
		\label{asp: stochastic}
		We assume that $R\in\mathbb{R}^{n\times n}$ is nonnegative\footnote{A matrix is nonnegative iff all its elements are nonnegative.} row-stochastic and $C\in\mathbb{R}^{n\times n}$ is nonnegative column-stochastic, i.e., $R\mathbf{1}=\mathbf{1}$ and $\mathbf{1}^{\T} C=\mathbf{1}^{\T}$.
	\end{assumption}
	\begin{lemma}
		\label{lem: eigenvectors u v}
		Under Assumption~\ref{asp: stochastic}, the matrix $R$ has a nonnegative left eigenvector $u^{\T}$ (w.r.t.\ eigenvalue $1$) with $u^{\T}\mathbf{1}=n$, and the matrix $C$ has a nonnegative right eigenvector $v$ (w.r.t. eigenvalue $1$) with $\mathbf{1}^{\T}v=n$ (see \cite{horn1990matrix}).
	\end{lemma}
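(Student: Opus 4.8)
The plan is to read this off from Perron--Frobenius theory for nonnegative matrices, since Assumption~\ref{asp: stochastic} makes both $R$ and $C$ nonnegative and their stochasticity pins the spectral radius of each to exactly $1$. First I would record the spectral radii. Because $R$ is row-stochastic, the relation $R\mathbf{1}=\mathbf{1}$ already exhibits $1$ as an eigenvalue, and since the induced $\infty$-norm of $R$ equals its maximum row sum, which is $1$, we have $\rho(R)\le\|R\|_\infty=1$; hence $\rho(R)=1$. Dually, $C$ column-stochastic means $C^{\T}$ is row-stochastic, so the same argument gives $\rho(C)=\rho(C^{\T})=1$.

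Next I would extract the eigenvectors. Viewing $C$ as a nonnegative matrix with $\rho(C)=1$, the Perron--Frobenius theorem guarantees that the spectral radius $1$ is an eigenvalue possessing a nonnegative, nonzero right eigenvector $v\ge 0$ with $Cv=v$. To obtain the \emph{left} eigenvector of $R$, I would apply the identical theorem to $R^{\T}$, which is nonnegative with $\rho(R^{\T})=\rho(R)=1$; this produces a nonnegative nonzero vector $u$ satisfying $R^{\T}u=u$, equivalently $u^{\T}R=u^{\T}$. This transpose device is what lets a single result cover both the row- and column-stochastic cases.

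Finally comes normalization: since $u\ge 0$ and $u\neq 0$ we have $u^{\T}\mathbf{1}=\sum_i u_i>0$, so rescaling $u\mapsto (n/u^{\T}\mathbf{1})\,u$ preserves both nonnegativity and the eigenvector property while enforcing $u^{\T}\mathbf{1}=n$, and the same step yields $\mathbf{1}^{\T}v=n$ for $v$. The one point requiring care---and hence the main obstacle---is invoking the correct form of Perron--Frobenius: because Assumption~\ref{asp: stochastic} does not assume $R$ or $C$ is irreducible, one cannot claim a strictly positive eigenvector, but the general nonnegative-matrix version still delivers a nonnegative nonzero eigenvector attached to the spectral radius, which is precisely what the statement asserts.
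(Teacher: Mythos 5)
Your proof is correct and follows exactly the route the paper intends: the paper offers no written proof, only the citation to Horn and Johnson, and your argument---spectral radius pinned to $1$ via row sums, the general (reducible) Perron--Frobenius theorem applied to $C$ and to $R^{\T}$, then rescaling---is precisely the standard argument that citation points to. You also correctly flag the key subtlety that without irreducibility only a nonnegative (not strictly positive) eigenvector is guaranteed, which is all the lemma claims.
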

	The next condition ensures that $1$ is a simple eigenvalue of both $R$ and $C$.
	\begin{assumption}
		\label{asp: positive diagonals}
		The diagonal entries of R and C are positive, i.e., $R_{ii}>0$ and  $C_{ii}>0$ for all $i\in \mathcal{V}$.
	\end{assumption}
	Finally, we give the condition on the structures of $\mathcal{G}_R$ and $\mathcal{G}_{C}$. This assumption is weaker than requiring that both $\mathcal{G}_R$ and $\mathcal{G}_{C}$ are strongly connected.
	\begin{assumption}
		\label{asp: nonempty root set}
		The graphs $\mathcal{G}_R$ and $\mathcal{G}_{C^T}$ each contain at least one spanning tree. Moreover, $\mathcal{R}_R\cap\mathcal{R}_{C^{\T}}\neq \emptyset$.
	\end{assumption}
	
	
	Supposing that we have a strongly connected communication graph $\mathcal{G}$, there are multiple ways to construct $\mathcal{G}_R$ and $\mathcal{G}_{C}$ satisfying Assumption \ref{asp: nonempty root set}. One trivial approach is to set $\mathcal{G}_R=\mathcal{G}_{C}=\mathcal{G}$. Another way is to pick at random $i_r\in\mathcal{V}$ and let $\mathcal{G}_R$ (respectively, $\mathcal{G}_{C}$) be a spanning tree (respectively, reversed spanning tree) contained in $\mathcal{G}$ with $i_r$ as its root.
	Once graphs $\mathcal{G}_R$ and $\mathcal{G}_{C}$ are established, matrices $R$ and $C$ can be designed accordingly.
	
	To provide some intuition for the development of this algorithm, let us consider the optimality condition for \eqref{problem} in the following form:
	\begin{subequations}\label{eq:opt_cond_consensus}
		\begin{align}
		&\mx^*\in\nul{I-R},\label{eq:oc_line1_consensus}\\
		&\mathbf{1}^{\T}\df(\mx^*)=\mathbf{0},\label{eq:oc_line2_consensus}
		\end{align}
	\end{subequations}
	where $R$ satisfies Assumption \ref{asp: stochastic}. Consider now the algorithm in~\eqref{algorithm RC}. 
	Suppose that the algorithm produces two sequences $\{\mx_k\}$ and $\{\my_k\}$ converging to some points $\mx_\infty$ and $\my_\infty$, respectively. Then from (\ref{eq:x-update}) and (\ref{eq:y-update}) we would have 
	\begin{subequations}\label{eq:converge_consensus}
		\begin{align}
		&(I-R)(\mx_\infty-\alpha\my_\infty)+\alpha \my_\infty=0, \label{eq:converge_line1_consensus}\\
		&(I-C)\my_\infty=0. \label{eq:converge_line2_consensus}
		\end{align}
	\end{subequations}
	If $\spa{I-R}$ and $\nul{I-\mathbf{C}}$ are disjoint\footnote{This is indeed a consequence of Assumption \ref{asp: nonempty root set}.}, from (\ref{eq:converge_consensus}) we would have $\mx_\infty\in\nul{I-R}$ and $\my_\infty=\mathbf{0}$. Hence $\mx_\infty$ satisfies the optimality condition in~\eqref{eq:oc_line1_consensus}. 
	Then by induction we know $\mathbf{1}^{\T}\df(\mx_\infty)=\mathbf{1}^{\T}\my_\infty=\mathbf{0}$, which is exactly the optimality condition in~\eqref{eq:oc_line2_consensus}.
	
	The structure of the algorithm in~\eqref{algorithm RC} is similar to that of the DIGing algorithm proposed 
	in~\cite{nedic2017achieving} with the mixing matrices distorted (doubly stochastic matrices split into 
	a row-stochastic matrix and a column-stochastic matrix). The $\mx$-update can be seen as 
	an inexact gradient step with consensus, while the $\my$-update can be viewed as a gradient tracking step. 
	Such an asymmetric $R$-$C$ structure design has already been used in the literature of average 
	consensus~\cite{cai2012average}. However, we can not analyze the proposed optimization algorithm 
	using linear dynamical systems since we have a nonlinear dynamics due to the gradient terms.
	
	We now show how the proposed algorithm (\ref{algorithm RC}) unifies different types of distributed architecture.
	For the fully decentralized case, suppose we have a graph $\mathcal{G}$ that is undirected and connected. Then $R$ and $C$ can be chosen as symmetric matrices, in which case the proposed algorithm degrades to the one considered in~\cite{nedic2017achieving}; if the graph is directed and strongly connected, we can set $\mathcal{G}_R=\mathcal{G}_{C}=\mathcal{G}$ and design the weights for $R$ and $C$ correspondingly.
	
	To illustrate the less straightforward situation of (semi)-centralized networks, let us give a simple example. 
	Consider a four-node star network composed by $\{1,2,3,4\}$ where node $1$ is situated at the center and nodes $2$, $3$, and $4$ are (bidirectionally) connected with node $1$ but not connected to each other. In this case, the matrix $R$ in our algorithm can be chosen as 
	\[R=\left[
	\begin{array}{cccc}
	1 & 0 &0&0\cr
	0.5& 0.5 & 0 & 0\cr
	0.5&0&0.5&0\cr
	0.5&0&0&0.5
	\end{array}\right]
	\]
	and 
	\[C=\left[
	\begin{array}{cccc}
	1&0.5&0.5&0.5\cr
	0&0.5&0&0\cr
	0&0&0.5&0\cr
	0&0&0&0.5
	\end{array}\right].
	\]
	For a graphical illustration, the corresponding network topologies of $\Gra_R$ and $\Gra_C$ are shown in Fig. \ref{fig:Toy}.
	\begin{figure}[h]
		\begin{center}
			\includegraphics[height=8em,clip = true, trim = 0in 0in 0in 0in]{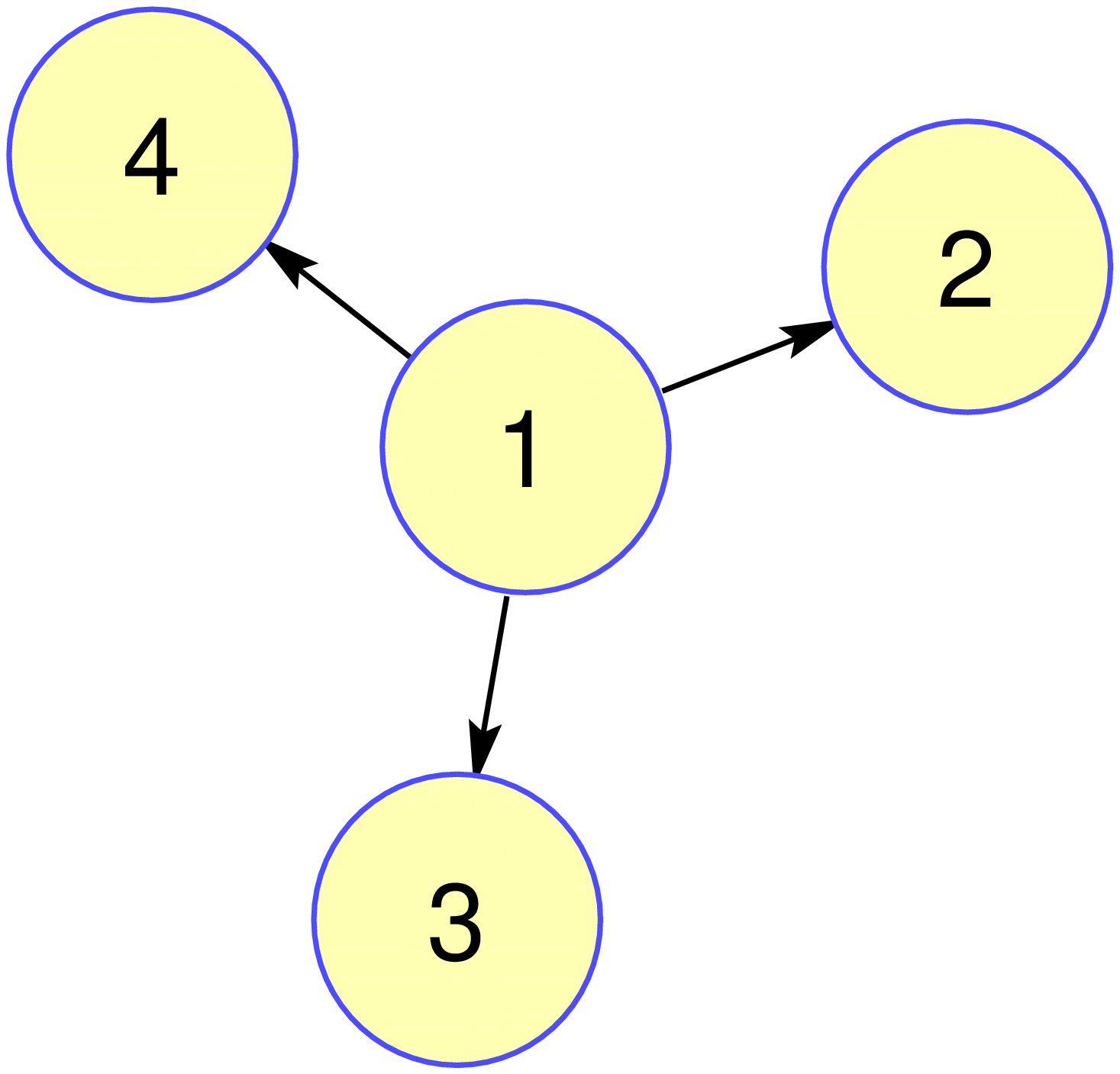}
			\includegraphics[height=8em,clip = true, trim = 0in 0in 0in 0in]{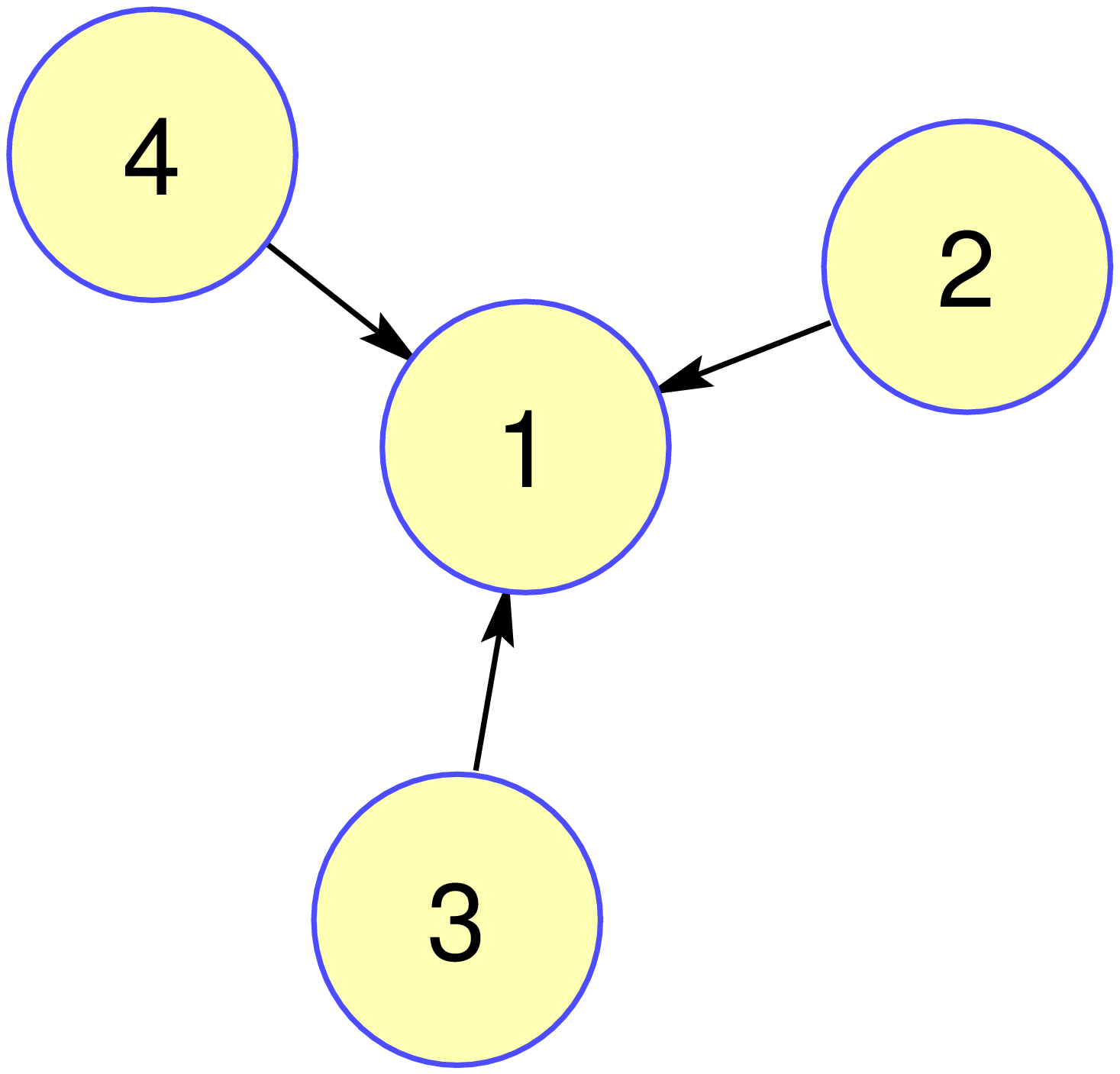}
			\caption{The left is $\Gra_R$ and the right is $\Gra_C$.}\label{fig:Toy}
		\end{center}
	\end{figure}
	The central node $1$ pushes (diffuses) information regarding $x_{1,k}$ to the neighbors (in this case the entire network) through $\Gra_R$, while the others can only passively infuse the information from node $1$.
	At the same time, node $1$ pulls (collects) information regarding $y_{i,k}$ ($i=2,3,4$) from the neighbors
	through $\Gra_C$, while the other nodes can only actively comply with the request from node $1$. 
	This motivates the algorithm's name {\it push-pull gradient method}.
	Although nodes $2$, $3$, and $4$ are updating their $y_i$'s accordingly, these quantities do not have to contribute to the optimization procedure and will die out geometrically fast due to the weights in the last three rows of $C$. Consequently, in this special case, the local step-size $\alpha$ for agents $2$, $3$, and $4$ can be set to $0$. Without loss of generality, suppose $f_1(x)=0, \forall x$. Then the algorithm becomes a typical centralized algorithm for minimizing $\sum_{i=2}^4 f_i(x)$ where the master node $1$ utilizes the slave nodes $2$, $3$, and $4$ to compute the gradient information in a distributed way.
	
	
	Taking the above as an example for explaining the semi-centralized case, it is worth nothing that node $1$ can be replaced by a strongly connected subnet in $\Gra_R$ and $\Gra_C$, respectively. 
	Correspondingly, nodes $2$, $3$, and $4$ can all be replaced by subnets as long as the information from the master layer in these subnets can be diffused to all the slave layer agents in $\Gra_R$, while the information from all the slave layer agents can be diffused to the master layer in $\Gra_C$.
	Specific requirements on connectivities of slave subnets can be understood by using the concept of rooted trees. We refer to the nodes as leaders if their roles in the network are similar to the role of node $1$; and the other nodes are termed as followers. Note that after the replacement of the individual nodes by subnets, the network structure in all subnets are decentralized, while the relationship between leader subnet and follower subnets is master-slave. This is why we refer to such an architecture as semi-centralized.
	\begin{remark}
		There can be multiple variants of the proposed algorithm depending on whether the Adapt-then-Combine (ATC) strategy \cite{Sayed2013} is used in the $\bx$-update and/or the $\by$-update (see Remark 3 in~\cite{nedic2017achieving} for more details). Our following analysis can be easily adapted for these variants. We have also tested one of the variants in Section \ref{sec: simulation}.
	\end{remark}
	
	\section{Convergence Analysis}\label{sec: conv_analysis}
	In this section, we study the convergence properties of the proposed algorithm.
	We first define the following variables:
	\begin{eqnarray*}
		\ox_k  :=  \frac{1}{n}u^{\T} \mx_k,\ \
		\oy_k  :=  \frac{1}{n}\mathbf{1}^{\T}\my_k.
	\end{eqnarray*}
	Our strategy is to bound $\|\ox_{k+1}-x^*\|_2$, $\|\mx_{k+1}-\mathbf{1}\ox_{k+1}\|_R$ and $\|\my_{k+1}-v\oy_{k+1}\|_C$ in terms of linear combinations of their previous values, where $\|\cdot\|_R$ and $\|\cdot\|_C$ are specific norms to be defined later. In this way we establish a linear system of inequalities which allows us to derive the convergence results. The proof technique was inspired by \cite{qu2017harnessing,xi2018linear}.
	
	\subsection{Preliminary Analysis}
	
	From the algorithm (\ref{algorithm RC}) and Lemma \ref{lem: eigenvectors u v}, we have
	\begin{equation}
	\label{ox_k+1 pre}
	\ox_{k+1}=\frac{1}{n}u^{\T} R(\mx_k-\alpha \my_k)=\ox_k-\frac{\alpha}{n}u^{\T}\my_k,
	\end{equation}
	and
	\begin{equation}
	\begin{aligned}
	\label{oy_ave}
	\oy_{k+1}&=\frac{1}{n}\mathbf{1}^{\T} C\left(\my_k+\nabla F(\mx_{k+1})-\nabla F(\mx_k)\right)\\
	&=\oy_k+\frac{1}{n}\mathbf{1}^{\T}\left(\nabla F(\mx_{k+1})-\nabla F(\mx_k)\right).
	\end{aligned}
	\end{equation}
	With the initialization $\my_0=\nabla F(\mx_0)$, we obtain by induction
	\begin{equation}
	\label{oy_k}
	\oy_k=\frac{1}{n}\mathbf{1}^{\T}\nabla F(\mx_{k}), \ \ \forall k.
	\end{equation}
	Let us further define $g_k:=\frac{1}{n}\mathbf{1}^{\T}\nabla F(\mathbf{1}\ox_k)$. Then, we obtain from relation (\ref{ox_k+1 pre})
	\begin{multline}
	\begin{aligned}
	\label{ox pre}
	&\ox_{k+1}=\ox_k-\frac{\alpha}{n}u^{\T}\left(\my_k-v\oy_k+v\oy_k\right)\\
	&=\ox_k-\frac{\alpha}{n}u^{\T}v\oy_k-\frac{\alpha}{n}u^{\T}\left(\my_k-v\oy_k\right)\\
	&=\ox_k-\alpha'g_k-\alpha'(\oy_k-g_k)-\frac{\alpha}{n}(u-\mathbf{1})^{\T}\left(\my_k-v\oy_k\right),
	\end{aligned}
	\end{multline}
	where 
	\begin{equation}
	\label{alpha'}
	\alpha':=\frac{\alpha}{n} u^{\T}v.
	\end{equation}
	We will show later that Assumption \ref{asp: nonempty root set} ensures $\alpha'>0$.
	
	In view of \eqref{algorithm RC} and Lemma \ref{lem: eigenvectors u v}, using \eqref{ox_k+1 pre} we have
	\begin{multline}
	\label{mx-ox pre}
	\mx_{k+{}1}-\mathbf{1}\ox_{k+1}=R(\mx_k-\alpha \my_k)-\mathbf{1}\ox_k+\frac{\alpha}{n}\mathbf{1}u^{\T}\my_k\\
	=R(\mx_k-\mathbf{1}\ox_k)-\alpha\left(R-\frac{\mathbf{1}u^{\T}}{n}\right)\my_k\\
	=\left(R-\frac{\mathbf{1}u^{\T}}{n}\right)(\mx_k-\mathbf{1}\ox_k)-\alpha\left(R-\frac{\mathbf{1}u^{\T}}{n}\right)\left(\my_k-\mathbf{1}\oy_k\right),
	\end{multline}
	and from \eqref{oy_ave} we obtain
	\begin{multline}
	\label{my-oy pre}
	\my_{k+1}-v\oy_{k+1}=C\my_k-v\oy_k\\
	+\left(C-\frac{v\mathbf{1}^{\T}}{n}\right)\left(\nabla F(\mx_{k+1})-\nabla F(\mx_k)\right)\\
	=\left(C-\frac{v\mathbf{1}^{\T}}{n}\right)(\my_k-v\oy_k)\\
	+\left(C-\frac{v\mathbf{1}^{\T}}{n}\right)\left(\nabla F(\mx_{k+1})-\nabla F(\mx_k)\right).
	\end{multline}
	\subsection{Supporting Lemmas}
	Before proceeding to the main results, we state a few useful lemmas.
	\begin{lemma}
		\label{lem: Lipschitz implications}
		Under Assumption \ref{asp; strconvex Lipschitz}, there holds
		\begin{align}
		\|\oy_k-g_k\|_2 & \le \frac{L}{\sqrt{n}}\|\mx_k-\mathbf{1}\ox_k\|_2,\\
		\|g_k\|_2 & \le L\|\ox_k-x^*\|_2.
		\end{align}
		In addition, when $\alpha'\le 2/(\mu+L)$, we have
		\begin{equation}
		\|\ox_k-\alpha'g_k-x^*\|_2 \le (1-\alpha'\mu)\|\ox_k-x^*\|_2, \ \ \forall k.
		\end{equation}
	\end{lemma}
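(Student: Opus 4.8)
The plan is to handle the three inequalities in turn, using the identity \eqref{oy_k}, which gives $\oy_k=\frac1n\one^{\T}\df(\mx_k)=\frac1n\sum_{i=1}^n\dfi(x_{i,k})$, together with the definition $g_k=\frac1n\one^{\T}\df(\one\ox_k)=\frac1n\sum_{i=1}^n\dfi(\ox_k)$ (throughout, the $1\times p$ rows $\oy_k,g_k,\ox_k,x^*$ are identified with vectors in $\R^p$). For the first bound I would write $\oy_k-g_k=\frac1n\sum_{i=1}^n(\dfi(x_{i,k})-\dfi(\ox_k))$, apply the triangle inequality, and then invoke the $L$-Lipschitz property of each $\dfi$ from Assumption~\ref{asp; strconvex Lipschitz} to obtain $\|\oy_k-g_k\|_2\le\frac{L}{n}\sum_{i=1}^n\|x_{i,k}-\ox_k\|_2$. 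A Cauchy--Schwarz step $\sum_{i=1}^n\|x_{i,k}-\ox_k\|_2\le\sqrt{n}\,\bigl(\sum_{i=1}^n\|x_{i,k}-\ox_k\|_2^2\bigr)^{1/2}=\sqrt{n}\,\|\mx_k-\one\ox_k\|_2$ then produces the stated factor $L/\sqrt{n}$, where the last equality uses Definition~\ref{def: norm n p} with the base $2$-norm (so that $\|\cdot\|_2$ on matrices is the Frobenius norm, whose square is $\sum_i\|x_{i,k}-\ox_k\|_2^2$).

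For the second bound I would use that $x^*$ minimizes $\barf=\sum_{i=1}^n f_i$, hence $\sum_{i=1}^n\dfi(x^*)=\mathbf{0}$ and $g_k=\frac1n\sum_{i=1}^n(\dfi(\ox_k)-\dfi(x^*))$. The triangle inequality together with $L$-Lipschitzness immediately gives $\|g_k\|_2\le\frac1n\sum_{i=1}^n L\|\ox_k-x^*\|_2=L\|\ox_k-x^*\|_2$.

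The third inequality is where the actual work lies. Here I would observe that $g_k=\nabla h(\ox_k)$ with $h:=\frac1n\barf=\frac1n\sum_{i=1}^n f_i$, which by Assumption~\ref{asp; strconvex Lipschitz} is $\mu$-strongly convex with $L$-Lipschitz gradient, and that $\nabla h(x^*)=\mathbf{0}$. Writing $e:=\ox_k-x^*$ and expanding, $\|\ox_k-\alpha'g_k-x^*\|_2^2=\|e\|_2^2-2\alpha'\langle e,\nabla h(\ox_k)\rangle+\alpha'^2\|\nabla h(\ox_k)\|_2^2$. I would then invoke the standard combined co-coercivity estimate for $\mu$-strongly convex, $L$-smooth functions, namely $\langle\nabla h(\ox_k)-\nabla h(x^*),e\rangle\ge\frac{\mu L}{\mu+L}\|e\|_2^2+\frac{1}{\mu+L}\|\nabla h(\ox_k)-\nabla h(x^*)\|_2^2$, together with the gradient lower bound $\|\nabla h(\ox_k)\|_2\ge\mu\|e\|_2$ (itself a consequence of strong convexity and Cauchy--Schwarz). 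Since $\alpha'\le 2/(\mu+L)$ renders the coefficient $\alpha'^2-2\alpha'/(\mu+L)$ of $\|\nabla h(\ox_k)\|_2^2$ nonpositive, substituting the lower bound $\|\nabla h(\ox_k)\|_2^2\ge\mu^2\|e\|_2^2$ in that term and collecting everything collapses the right-hand side exactly to $\bigl(1-2\alpha'\mu+\alpha'^2\mu^2\bigr)\|e\|_2^2=(1-\alpha'\mu)^2\|e\|_2^2$. Taking square roots, and noting $1-\alpha'\mu\ge0$ because $\alpha'\mu\le 2\mu/(\mu+L)\le1$ (as $\mu\le L$), yields the claim.

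The main obstacle is securing the \emph{sharp} contraction factor $1-\alpha'\mu$ in the third part: bounding via strong convexity and $L$-smoothness separately only delivers the weaker factor $1-\alpha'\mu(2-\alpha'L)$, so the tight result genuinely relies on the combined co-coercivity inequality and on inserting the gradient lower bound in precisely the direction dictated by the now-nonpositive coefficient. Along the way I would also confirm the admissibility facts $\alpha'>0$ and $\alpha'\mu\le1$, both of which follow from $0<\alpha'\le 2/(\mu+L)$ and $\mu\le L$; positivity of $\alpha'=\frac{\alpha}{n}u^{\T}v$ is asserted to follow from Assumption~\ref{asp: nonempty root set} and may be taken as given here.
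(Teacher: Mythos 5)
Your proof is correct and follows essentially the same route as the paper: the first two bounds are obtained exactly as in the paper's appendix (triangle inequality, $L$-Lipschitz gradients, $\sum_i\nabla f_i(x^*)=\mathbf{0}$, and Cauchy--Schwarz to convert $\sum_i\|x_{i,k}-\ox_k\|_2$ into $\sqrt{n}\,\|\mx_k-\mathbf{1}\ox_k\|_2$). The only difference is the third inequality, which the paper simply cites as Lemma~10 of \cite{qu2017harnessing}; your self-contained derivation via the combined co-coercivity inequality and the lower bound $\|\nabla h(\ox_k)\|_2\ge\mu\|\ox_k-x^*\|_2$ is precisely the standard proof of that cited contraction result, including the correct identification of $h=\frac{1}{n}\sum_{i=1}^n f_i$ as $\mu$-strongly convex and $L$-smooth.
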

	\begin{proof}
		See Appendix \ref{subsec: proof lemma Lipschitz implications}.
	\end{proof}
	\begin{lemma}
		\label{lem: condition u v >0}
		Suppose Assumption \ref{asp: stochastic} holds, and assume that
		$\mathcal{R}_R \neq\emptyset$ and $\mathcal{R}_{C^{\T}}\neq\emptyset$. Then, $\mathcal{R}_R\cap\mathcal{R}_{C^{\T}}\neq \emptyset$ iff $u^{\T} v>0$.
	\end{lemma}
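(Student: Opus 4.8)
The plan is to reduce the claimed equivalence to a purely structural statement about the supports of $u$ and $v$, and then to establish that statement through the strongly-connected-component decomposition of the induced graphs. The starting observation is that, by Lemma~\ref{lem: eigenvectors u v}, both $u$ and $v$ are \emph{nonnegative}, so $u^{\T}v=\sum_{i=1}^n u_i v_i$ is a sum of nonnegative terms; hence $u^{\T}v>0$ holds if and only if the supports $\mathrm{supp}(u):=\{i:u_i>0\}$ and $\mathrm{supp}(v):=\{i:v_i>0\}$ intersect. Thus it suffices to prove the two identities $\mathrm{supp}(u)=\mathcal{R}_R$ and $\mathrm{supp}(v)=\mathcal{R}_{C^{\T}}$, after which the conclusion is immediate: $u^{\T}v>0 \iff \mathrm{supp}(u)\cap\mathrm{supp}(v)\neq\emptyset \iff \mathcal{R}_R\cap\mathcal{R}_{C^{\T}}\neq\emptyset$.

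The key fact I would isolate and prove is: for any nonnegative row-stochastic matrix $M$ with $\mathcal{R}_M\neq\emptyset$, the nonnegative left eigenvector of $M$ associated with eigenvalue $1$ has support exactly $\mathcal{R}_M$. Applying this with $M=R$ gives $\mathrm{supp}(u)=\mathcal{R}_R$. Since $C$ is column-stochastic, $C^{\T}$ is row-stochastic, and the right $1$-eigenvector $v$ of $C$ satisfies $v^{\T}C^{\T}=v^{\T}$, i.e.\ $v$ is precisely the nonnegative left $1$-eigenvector of $C^{\T}$; the same fact with $M=C^{\T}$ yields $\mathrm{supp}(v)=\mathcal{R}_{C^{\T}}$. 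Both applications are legitimate because the hypotheses assume $\mathcal{R}_R\neq\emptyset$ and $\mathcal{R}_{C^{\T}}\neq\emptyset$.

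To prove the key fact, I would first recall the graph-theoretic characterization that $\mathcal{R}_M\neq\emptyset$ is equivalent to the condensation of $\mathcal{G}_M$ (its DAG of strongly connected components) having a \emph{unique} source component $S_0$, in which case $\mathcal{R}_M=S_0$ (a node reaches every other node iff its component is an ancestor of all others, and the unique such component is exactly the unique source). By the definition of $\mathcal{G}_M$, $S_0$ being a source means $M_{ij}=0$ whenever $i\in S_0$ and $j\notin S_0$; reordering the indices so that $S_0$ comes first puts $M$ in the block form $M=\bigl[\begin{smallmatrix} A & 0\\ B & D\end{smallmatrix}\bigr]$, where $A=M_{S_0 S_0}$ is row-stochastic and irreducible and $D=M_{TT}$, with $T=\mathcal{V}\setminus S_0$, is substochastic. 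Writing $u^{\T}=(u_{S_0}^{\T},u_T^{\T})$, the relation $u^{\T}M=u^{\T}$ decouples into $u_T^{\T}D=u_T^{\T}$ and $u_{S_0}^{\T}A+u_T^{\T}B=u_{S_0}^{\T}$.

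The main obstacle, and the place where the connectivity hypothesis does real work, is showing $u_T=0$. The cleanest route is the Markov-chain reading: interpreting $M$ as a transition matrix, $\mathcal{R}_M\neq\emptyset$ means the chain has a single recurrent class $S_0$ and every state of $T$ is transient, so any stationary vector assigns zero mass to $T$. Concretely, every index of $T$ can reach $S_0$ along edges $i\to j$ with $M_{ij}>0$ (this reachability is the reverse of the reachability in $\mathcal{G}_M$, where $S_0$ reaches all nodes), so each row of $D$ eventually leaks mass out of $T$; standard substochastic theory then gives $\rho(D)<1$, whence $I-D$ is nonsingular and $u_T^{\T}(I-D)=0$ forces $u_T=0$. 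Substituting $u_T=0$ reduces the remaining block equation to $u_{S_0}^{\T}A=u_{S_0}^{\T}$ with $A$ irreducible stochastic; by the Perron--Frobenius theorem its left $1$-eigenvector is unique up to scaling and strictly positive, and since $u^{\T}\one=n>0$ forces $u_{S_0}\neq 0$, we get $u_{S_0}>0$ entrywise. Hence $\mathrm{supp}(u)=S_0=\mathcal{R}_M$, which completes the key fact and therefore the lemma.
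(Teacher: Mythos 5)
Your proof is correct and follows essentially the same route as the paper's: reorder the vertices so the matrix becomes block lower-triangular with an irreducible row-stochastic block on the root set, apply Perron--Frobenius to that block, identify $\mathrm{supp}(u)=\mathcal{R}_R$ and $\mathrm{supp}(v)=\mathcal{R}_{C^{\T}}$, and conclude via nonnegativity of the entries of $u^{\T}v$. You are in fact more careful than the paper at one point: where the paper asserts uniqueness of the left eigenvector of the reducible block matrix directly ``from the Perron--Frobenius theorem,'' you explicitly justify that the eigenvector vanishes on the non-root block via the substochastic spectral-radius bound $\rho(D)<1$, which is the step the paper glosses over.
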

	\begin{proof}
		See Appendix \ref{subsec: proof condition u v >0}.
	\end{proof}
	Lemma \ref{lem: condition u v >0} explains why Assumption \ref{asp: nonempty root set} is essential for the Push-Pull algorithm (\ref{algorithm RC}) to work. Without the condition, $\alpha'=0$ by its definition in (\ref{alpha'}).
	\begin{lemma}
		\label{lem: spectral radii}
		Suppose Assumptions \ref{asp: stochastic}-\ref{asp: nonempty root set} hold. Let
		$\rho_R$ and $\rho_C$ be the spectral radii of $(R-\mathbf{1}u^{\T}/n)$ and $(C-v\mathbf{1}^{\T}/n)$, respectively. Then, we have $\rho_R<1$ and $\rho_C<1$.
	\end{lemma}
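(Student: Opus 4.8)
The plan is to exploit the fact that the rank-one matrix $\Pi_R:=\frac{1}{n}\mathbf{1}u^{\T}$ is precisely the spectral projector of $R$ associated with the eigenvalue $1$, so that subtracting it deletes that eigenvalue while leaving the rest of the spectrum of $R$ unchanged. First I would record the algebraic identities that make $\Pi_R$ a projector that commutes with $R$: using $R\mathbf{1}=\mathbf{1}$ from Assumption~\ref{asp: stochastic}, $u^{\T}R=u^{\T}$ from Lemma~\ref{lem: eigenvectors u v}, and the normalization $u^{\T}\mathbf{1}=n$, one checks directly that
\begin{equation*}
\Pi_R^2=\Pi_R,\qquad R\Pi_R=\Pi_R R=\Pi_R .
\end{equation*}
A one-line induction then gives $(R-\Pi_R)^k=R^k-\Pi_R$ for every $k\ge 1$, which reduces the entire claim to controlling the powers of $R$. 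Indeed, since $\rho_R=\rho(R-\Pi_R)$ and $M^k\to 0$ is equivalent to $\rho(M)<1$ (e.g.\ via Gelfand's formula $\rho(M)=\lim_k\|M^k\|^{1/k}$), it suffices to prove that $R^k\to\Pi_R$ as $k\to\infty$.

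The heart of the argument is therefore to establish this convergence of $R^k$ to $\frac{1}{n}\mathbf{1}u^{\T}$ under Assumptions~\ref{asp: stochastic}, \ref{asp: positive diagonals} and \ref{asp: nonempty root set}. This is exactly the ergodicity statement for a row-stochastic matrix whose graph carries a spanning tree. I would argue that Assumption~\ref{asp: nonempty root set} (existence of a spanning tree in $\mathcal{G}_R$) forces the chain to possess a single closed communicating class — the set of roots $\mathcal{R}_R$, which is reachable from every node — so that $1$ is a \emph{simple} eigenvalue of $R$; and that Assumption~\ref{asp: positive diagonals} ($R_{ii}>0$) makes that class aperiodic, which rules out any eigenvalue of modulus $1$ other than $1$ itself. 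With $1$ simple and strictly dominant, the Perron–Frobenius/consensus theory yields $R^k\to\Pi_R$, and hence $\rho_R<1$.

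For the column-stochastic matrix $C$ I would pass to the transpose. Since $\mathbf{1}^{\T}C=\mathbf{1}^{\T}$, the matrix $C^{\T}$ is row-stochastic with $C^{\T}\mathbf{1}=\mathbf{1}$, its induced graph is $\mathcal{G}_{C^{\T}}$ (which contains a spanning tree by Assumption~\ref{asp: nonempty root set}), and its diagonal is positive by Assumption~\ref{asp: positive diagonals}. Writing $Cv=v$ as $v^{\T}C^{\T}=v^{\T}$ with $\mathbf{1}^{\T}v=n$, the corresponding projector for $C^{\T}$ is $\frac{1}{n}\mathbf{1}v^{\T}=\big(\frac{1}{n}v\mathbf{1}^{\T}\big)^{\T}$. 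Applying the argument above to $C^{\T}$ gives $\rho\big(C^{\T}-\frac{1}{n}\mathbf{1}v^{\T}\big)<1$, and since a matrix and its transpose share the same spectrum, $\rho_C=\rho\big(C-\frac{1}{n}v\mathbf{1}^{\T}\big)=\rho\big(C^{\T}-\frac{1}{n}\mathbf{1}v^{\T}\big)<1$.

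The main obstacle I anticipate is the convergence step $R^k\to\Pi_R$ in the \emph{reducible} regime: Assumption~\ref{asp: nonempty root set} is strictly weaker than strong connectivity, so $R$ need not be irreducible and the elementary Perron–Frobenius theorem for primitive matrices does not apply directly. The delicate point is to show that the spanning-tree hypothesis pins down a \emph{unique} closed communicating class (guaranteeing simplicity of the eigenvalue $1$ and that the limiting projector is the specific rank-one matrix $\frac{1}{n}\mathbf{1}u^{\T}$ with $u$ supported on $\mathcal{R}_R$), while the positive-diagonal hypothesis is what eliminates the remaining unimodular eigenvalues. Once those two structural facts are in place, everything else is the routine bookkeeping outlined above.
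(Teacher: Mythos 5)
Your proposal is correct, and it reaches the conclusion by a genuinely different route than the paper. The paper's own proof is spectral and citation-based: it invokes \cite[Lemma~3.4]{ren2005consensus} to assert that under Assumptions~\ref{asp: stochastic}--\ref{asp: nonempty root set} the eigenvalue $1$ of $R$ (and of $C$) is simple and is the unique eigenvalue of maximum modulus, and then deflates at the level of eigenvectors: if $\tilde{u}^{\T}\left(R-\frac{\mathbf{1}u^{\T}}{n}\right)=\lambda\tilde{u}^{\T}$ with $\lambda\neq 0$, biorthogonality against the right eigenvector $\mathbf{1}$ (eigenvalue $0$) forces $\tilde{u}^{\T}\mathbf{1}=0$, hence $\tilde{u}^{\T}R=\lambda\tilde{u}^{\T}$, so $\lambda$ is an eigenvalue of $R$ distinct from the simple dominant eigenvalue $1$ and thus $|\lambda|<1$; the case of $C$ is dispatched ``similarly'' (your explicit transposition, using that Assumption~\ref{asp: nonempty root set} places the spanning tree in $\mathcal{G}_{C^{\T}}$, is the clean way to make that step precise). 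You instead establish the projector identities $\Pi_R^2=\Pi_R$ and $R\Pi_R=\Pi_R R=\Pi_R$ for $\Pi_R:=\frac{1}{n}\mathbf{1}u^{\T}$, deduce $(R-\Pi_R)^k=R^k-\Pi_R$, and reduce $\rho_R<1$ to the ergodic convergence $R^k\to\Pi_R$, which you ground in Markov-chain structure: the roots $\mathcal{R}_R$ form the unique closed communicating class (this also pins the limit to the specific rank-one matrix $\frac{1}{n}\mathbf{1}u^{\T}$, since simplicity makes the normalized nonnegative left eigenvector of Lemma~\ref{lem: eigenvectors u v} unique), and Assumption~\ref{asp: positive diagonals} supplies self-loops, hence aperiodicity. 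Your unique-closed-class argument is sound once one tracks the paper's edge convention ($(j,i)\in\mathcal{E}_R$ iff $R_{ij}>0$, so ``root reaches everyone'' in $\mathcal{G}_R$ is exactly ``everyone reaches the root'' in the chain digraph, as your phrasing has it). Be aware, though, that your reduction is an equivalence rather than a simplification: $R^k\to\Pi_R$ holds iff $1$ is simple and strictly dominant, so you still consume the same nontrivial structural theorem the paper cites, only in ergodic rather than spectral form, and your final convergence step (unique aperiodic closed class plus transient remainder implies $R^k\to\mathbf{1}\pi^{\T}$) is outlined at the same level of detail as the paper's citation. What your route buys is a self-contained account of precisely where Assumptions~\ref{asp: positive diagonals} and~\ref{asp: nonempty root set} enter, plus the quantitative identity $(R-\Pi_R)^k=R^k-\Pi_R$ for the deflated powers; what the paper's route buys is brevity and a sharper by-product, namely that the spectrum of $R-\frac{\mathbf{1}u^{\T}}{n}$ is exactly that of $R$ with the eigenvalue $1$ replaced by $0$.
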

	\begin{proof}
		See Appendix \ref{subsec: proof lemma spectral radii}.
	\end{proof}
	\begin{lemma}
		There exist matrix norms $\|\cdot\|_R$ and $\|\cdot\|_C$ 
		such that $\sigma_R:=\|R-\frac{\mathbf{1}u^{\T}}{n}\|_R<1$, $\sigma_C:=\|C-\frac{v\mathbf{1}^{\T}}{n}\|_C<1$, and $\sigma_R$ and $\sigma_C$ are arbitrarily close to $\rho_R$ and $\rho_C$, respectively.
	\end{lemma}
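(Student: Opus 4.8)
The plan is to reduce the statement to a classical fact from matrix analysis: for any matrix $A\in\mathbb{R}^{n\times n}$ and any $\epsilon>0$ there exists a submultiplicative matrix norm $\|\cdot\|_*$, induced by a vector norm, such that $\rho(A)\le\|A\|_*\le\rho(A)+\epsilon$, where $\rho(A)$ denotes the spectral radius (see \cite{horn1990matrix}). Granting this fact, the lemma would follow almost immediately by applying it separately to the two matrices $A_R:=R-\frac{\mathbf{1}u^{\T}}{n}$ and $A_C:=C-\frac{v\mathbf{1}^{\T}}{n}$.

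Concretely, I would first note that by Lemma \ref{lem: spectral radii} we already know $\rho_R=\rho(A_R)<1$ and $\rho_C=\rho(A_C)<1$. Then I would fix any $\epsilon>0$ small enough that $\rho_R+\epsilon<1$ and $\rho_C+\epsilon<1$; such an $\epsilon$ exists precisely because both spectral radii are strictly less than $1$. Invoking the classical fact with $A=A_R$ would produce a norm $\|\cdot\|_R$ for which $\sigma_R=\|A_R\|_R\le\rho_R+\epsilon<1$, and likewise with $A=A_C$ a norm $\|\cdot\|_C$ with $\sigma_C=\|A_C\|_C\le\rho_C+\epsilon<1$. Since every matrix norm satisfies $\|A\|_*\ge\rho(A)$, we would have $\rho_R\le\sigma_R\le\rho_R+\epsilon$ and $\rho_C\le\sigma_C\le\rho_C+\epsilon$; letting $\epsilon\downarrow 0$ then shows that $\sigma_R$ and $\sigma_C$ can be made arbitrarily close to $\rho_R$ and $\rho_C$, which is the final claim.

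If a self-contained argument is preferred, I would exhibit the norm explicitly rather than quote the fact. Writing a Schur decomposition $A=Q T Q^{*}$ with $Q$ unitary and $T$ upper triangular carrying the eigenvalues of $A$ on its diagonal, I would conjugate by the diagonal scaling $D_\delta=\mathrm{diag}(\delta,\delta^2,\ldots,\delta^n)$ for a small $\delta>0$. The entries of $D_\delta^{-1}TD_\delta$ above the diagonal are the corresponding entries of $T$ multiplied by positive powers of $\delta$, so $D_\delta^{-1}TD_\delta$ converges to the diagonal matrix of eigenvalues as $\delta\downarrow 0$, whence $\|D_\delta^{-1}TD_\delta\|_2\to\rho(A)$. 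Choosing $\delta$ small and setting $S:=D_\delta^{-1}Q^{*}$, the vector norm $x\mapsto\|Sx\|_2$ would induce the matrix norm $\|M\|_*:=\|SMS^{-1}\|_2$ with $\|A\|_*=\|D_\delta^{-1}TD_\delta\|_2\le\rho(A)+\epsilon$, giving the desired norms for $A_R$ and $A_C$.

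There is essentially no hard step here: the content is carried entirely by Lemma \ref{lem: spectral radii}, which guarantees the strict inequalities $\rho_R<1$ and $\rho_C<1$ and thus the feasibility of choosing $\epsilon$ with $\rho+\epsilon<1$. The only point I would be careful about is that the norms $\|\cdot\|_R$ and $\|\cdot\|_C$ are genuine (submultiplicative, induced) matrix norms, since the linear-system-of-inequalities argument in the sequel relies on submultiplicativity; this is automatic from the induced-norm construction above.
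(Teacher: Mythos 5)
Your proposal is correct and is essentially the paper's own argument: the paper proves this lemma simply by citing \cite[Lemma 5.6.10]{horn1990matrix}, which is precisely the classical fact you invoke (that for any matrix $A$ and any $\epsilon>0$ there is a submultiplicative matrix norm with $\rho(A)\le\|A\|_*\le\rho(A)+\epsilon$), applied to $R-\frac{\mathbf{1}u^{\T}}{n}$ and $C-\frac{v\mathbf{1}^{\T}}{n}$ using the strict bounds $\rho_R<1$ and $\rho_C<1$ from Lemma \ref{lem: spectral radii}. Your self-contained Schur-plus-diagonal-scaling construction is exactly the standard proof of that cited lemma, so nothing differs in substance.
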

	\begin{proof}
		See \cite[Lemma 5.6.10]{horn1990matrix}  and the discussions thereafter.
	\end{proof}
	In the rest of this paper, with a slight abuse of notation,
	we do not distinguish between the vector norms on $\mathbb{R}^n$ and their induced
	matrix norms.
	\begin{lemma}
		\label{lem: matrix norm production}
		Given an arbitrary norm $\|\cdot\|$, for any $W\in\mathbb{R}^{n\times n}$ and $\mx\in\mathbb{R}^{n\times p}$, we have $\|W\mx\|\le \|W\|\|\mx\|$. For any $w\in\mathbb{R}^{n\times 1}$ and $x\in\mathbb{R}^{1\times p}$, we have $\|wx\|=\|w\|\|x\|_2$.
	\end{lemma}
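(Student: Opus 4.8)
The plan is to unwind Definition~\ref{def: norm n p} one column at a time and reduce both claims to the defining inequality of the induced matrix norm together with the monotonicity of the Euclidean norm on the nonnegative orthant. Recall from the remark preceding this lemma that $\|W\|$ denotes the matrix norm induced by the vector norm $\|\cdot\|$ on $\mathbb{R}^n$.

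For the first inequality, I would note that matrix multiplication acts columnwise, so the columns of $W\mx$ are exactly $W\mx^{(1)},\ldots,W\mx^{(p)}$. By the definition of the induced norm, each column obeys $\|W\mx^{(j)}\|\le\|W\|\,\|\mx^{(j)}\|$ for $j=1,\ldots,p$. Hence the two nonnegative $p$-vectors assembled as in Definition~\ref{def: norm n p} satisfy the entrywise bound
\begin{equation*}
\left[\|W\mx^{(1)}\|,\ldots,\|W\mx^{(p)}\|\right]\le\|W\|\left[\|\mx^{(1)}\|,\ldots,\|\mx^{(p)}\|\right].
\end{equation*}
Applying $\|\cdot\|_2$ to both sides, using that $\|\cdot\|_2$ is monotone with respect to entrywise ordering of nonnegative vectors, and then pulling out the scalar $\|W\|\ge0$, yields $\|W\mx\|\le\|W\|\,\|\mx\|$ directly from Definition~\ref{def: norm n p}.

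For the equality, I would observe that $wx$ is the rank-one (outer-product) matrix whose $j$-th column is $x_j\,w$, where $x_j$ denotes the $j$-th entry of the row vector $x$. Absolute homogeneity of the vector norm gives $\|(wx)^{(j)}\|=|x_j|\,\|w\|$, so Definition~\ref{def: norm n p} factors the common scalar $\|w\|$ out of the Euclidean norm, leaving $\|wx\|=\|w\|\,\big\|[|x_1|,\ldots,|x_p|]\big\|_2=\|w\|\,\|x\|_2$.

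There is no real obstacle here; the argument is bookkeeping once Definition~\ref{def: norm n p} is in hand. The only step deserving a word of justification is the monotonicity of $\|\cdot\|_2$ on the nonnegative orthant, namely that $0\le a_j\le b_j$ for all $j$ implies $\|(a_1,\ldots,a_p)\|_2\le\|(b_1,\ldots,b_p)\|_2$, which is immediate from the explicit formula $\|a\|_2=(\sum_j a_j^2)^{1/2}$. This is also the one place where the specific choice of the \emph{outer} $2$-norm in Definition~\ref{def: norm n p} (rather than an arbitrary norm) is actually used.
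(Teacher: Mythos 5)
Your proof is correct and takes essentially the same route as the paper's: both unwind Definition~\ref{def: norm n p} column by column, apply the induced-norm bound $\|W\mx^{(j)}\|\le\|W\|\,\|\mx^{(j)}\|$ and factor the scalar $\|W\|$ out of the outer $2$-norm, and use absolute homogeneity ($\|x_j w\|=|x_j|\,\|w\|$) for the rank-one equality. The only difference is that you make explicit the entrywise monotonicity of $\|\cdot\|_2$ on nonnegative vectors, which the paper uses implicitly.
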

	\begin{proof}
		See Appendix \ref{subsec: proof lemma matrix norm production}.
	\end{proof}
	\begin{lemma}
		\label{lem: norm equivalence}
		There exist constants $\delta_{C,R}, \delta_{C,2}, \delta_{R,C}, \delta_{R,2}>0$ such that for all $\mx\in\mathbb{R}^{n\times p}$, we have  $\|\mx\|_C\le \delta_{C,R}\|\mx\|_R$, $\|\mx\|_C\le \delta_{C,2}\|\mx\|_2$, $\|\mx\|_R\le \delta_{R,C}\|\mx\|_C$,  and $\|\mx\|_R\le \delta_{R,2}\|\mx\|_2$. In addition, with 
		a proper rescaling of the norms $\|\cdot\|_R$ and $\|\cdot\|_C$, we have
		$\|\mx\|_2\le \|\mx\|_R$ and $\|\mx\|_2\le \|\mx\|_C$.
	\end{lemma}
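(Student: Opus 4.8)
The plan is to reduce the entire statement to the elementary fact that all norms on the finite-dimensional space $\mathbb{R}^n$ are equivalent, and then to lift the resulting scalar comparisons from vectors to matrices through the columnwise construction of Definition~\ref{def: norm n p}.

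First I would argue at the level of vectors on $\mathbb{R}^n$. Because $\|\cdot\|_R$, $\|\cdot\|_C$, and $\|\cdot\|_2$ are all norms on $\mathbb{R}^n$, equivalence of norms yields positive constants $\delta_{C,R},\delta_{C,2},\delta_{R,C},\delta_{R,2}$ such that, for every $w\in\mathbb{R}^n$,
\begin{equation*}
\|w\|_C\le\delta_{C,R}\|w\|_R,\qquad \|w\|_C\le\delta_{C,2}\|w\|_2,
\end{equation*}
\begin{equation*}
\|w\|_R\le\delta_{R,C}\|w\|_C,\qquad \|w\|_R\le\delta_{R,2}\|w\|_2.
\end{equation*}

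Next I would lift each of these to $\mathbb{R}^{n\times p}$. Writing a matrix $\mx$ through its columns $\mx^{(1)},\ldots,\mx^{(p)}$, Definition~\ref{def: norm n p} gives $\|\mx\|_C^2=\sum_{j=1}^p\|\mx^{(j)}\|_C^2$, and likewise for $\|\cdot\|_R$ and $\|\cdot\|_2$. Applying the vector bound $\|\mx^{(j)}\|_C\le\delta_{C,R}\|\mx^{(j)}\|_R$ to each column and summing the squares gives $\|\mx\|_C^2\le\delta_{C,R}^2\sum_{j}\|\mx^{(j)}\|_R^2=\delta_{C,R}^2\|\mx\|_R^2$, so that $\|\mx\|_C\le\delta_{C,R}\|\mx\|_R$ with the very same constant; the other three matrix inequalities follow in exactly the same way. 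The mechanism is simply that the $\ell_2$-aggregation of column norms in Definition~\ref{def: norm n p} preserves any uniform scalar bound between the underlying vector norms.

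For the rescaling claim, equivalence again supplies a constant $c_R>0$ with $\|w\|_2\le c_R\|w\|_R$ for all $w$; replacing $\|\cdot\|_R$ by $c_R\|\cdot\|_R$ (and $\|\cdot\|_C$ by the analogous multiple) forces $\|w\|_2\le\|w\|_R$, which lifts to $\|\mx\|_2\le\|\mx\|_R$ by the same columnwise computation, and similarly for $\|\cdot\|_C$. The one point that must be verified is that this rescaling leaves intact the bounds $\sigma_R<1$ and $\sigma_C<1$ from the preceding lemma, and indeed it does: the operator norm induced by a vector norm is invariant under scaling that vector norm by a positive constant, since $\sup_{w\neq 0}(c_R\|Aw\|)/(c_R\|w\|)=\sup_{w\neq 0}\|Aw\|/\|w\|$. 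None of these steps is hard; the only thing to be careful about is precisely this scale-invariance, which is what lets us normalize so that $\|\mx\|_2\le\|\mx\|_R$ and $\|\mx\|_2\le\|\mx\|_C$ hold without sacrificing $\sigma_R,\sigma_C<1$.
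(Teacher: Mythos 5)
Your proposal is correct and follows essentially the same route as the paper, whose entire proof is the one-line appeal to equivalence of norms on $\mathbb{R}^n$ together with Definition~\ref{def: norm n p}. You merely make explicit two details the paper leaves implicit --- the columnwise lifting via $\|\mx\|^2=\sum_{j=1}^p\|\mx^{(j)}\|^2$, and the fact that rescaling a vector norm by a positive constant leaves its induced matrix norm (hence $\sigma_R,\sigma_C<1$) unchanged --- both of which are accurate and worth noting.
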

	\begin{proof}
		The result follows from the equivalence relation of all norms on $\mathbb{R}^n$ and Definition \ref{def: norm n p}.
	\end{proof}
	\subsection{Main Results}
	The following lemma establishes a linear system of inequalities that bound $\|\ox_{k+1}-x^*\|_2$, $\|\mx_{k+1}-\mathbf{1}\ox_k\|_R$ and $\|\my_{k+1}-v\oy_k\|_C$.
	\begin{lemma}
		\label{lem: important inequalities}
		Under Assumptions \ref{asp; strconvex Lipschitz}-\ref{asp: nonempty root set}, when $\alpha'\le 2/(\mu+L)$, we have the following linear system of inequalities:
		\begin{equation}
		\label{main ineqalities}
		\begin{bmatrix}
		\|\ox_{k+1}-x^*\|_2\\
		\|\mx_{k+1}-\mathbf{1}\ox_{k+1}\|_R\\
		\|\my_{k+1}-v\oy_{k+1}\|_C
		\end{bmatrix}
		\le
		A
		\begin{bmatrix}
		\|\ox_k-x^*\|_2\\
		\|\mx_k-\mathbf{1}\ox_k\|_R\\
		\|\my_k-v\oy_k\|_C
		\end{bmatrix},
		\end{equation}
		where the inequality is to be taken component-wise, and elements of the transition matrix $A=[a_{ij}]$ are given by:
		\begin{eqnarray*}
			\begin{bmatrix}
				a_{11}\\
				a_{21}\\
				a_{31}
			\end{bmatrix} & = & 
			\begin{bmatrix}
				1-\alpha'\mu\\
				\alpha\sigma_R \|v-\mathbf{1}\|_R L\\
				\alpha\sigma_C \delta_{C,2}\|Rv\|_2 L^2
			\end{bmatrix},\\
			\begin{bmatrix}
				a_{12}\\
				a_{22}\\
				a_{32}
			\end{bmatrix} & = &
			\begin{bmatrix}
				\frac{\alpha'L}{\sqrt{n}}\\
				\sigma_R\left(1+\alpha\|v-\mathbf{1}\|_R \frac{L}{\sqrt{n}}\right)\\
				\sigma_C \delta_{C,2}L\left(\|R-I\|_2+\alpha\|Rv\|_2\frac{L}{\sqrt{n}}\right)
			\end{bmatrix},\\
			\begin{bmatrix}
				a_{13}\\
				a_{23}\\
				a_{33}
			\end{bmatrix} & = &
			\begin{bmatrix}
				\frac{\alpha\|u-\mathbf{1}\|_2}{n}\\
				\alpha\sigma_R\delta_{R,C}\\
				\sigma_C\left(1+\alpha\delta_{C,2}\|R\|_2 L\right)
			\end{bmatrix}.
		\end{eqnarray*}
	\end{lemma}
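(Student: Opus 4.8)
The plan is to treat the three components of the state vector one at a time, starting in each case from the exact recursions already obtained in the preliminary analysis: \eqref{ox pre} for $\ox_{k+1}-x^*$, \eqref{mx-ox pre} for $\mx_{k+1}-\mathbf{1}\ox_{k+1}$, and \eqref{my-oy pre} for $\my_{k+1}-v\oy_{k+1}$. To each I apply the appropriate norm ($\|\cdot\|_2$, $\|\cdot\|_R$, or $\|\cdot\|_C$), the triangle inequality, and the submultiplicativity of Lemma \ref{lem: matrix norm production} (in particular $\|wx\|=\|w\|\|x\|_2$ for the rank-one terms), together with the contraction factors $\sigma_R,\sigma_C<1$ from the norm-contraction lemma. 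All residual $2$-norms sitting on a state component are finally upgraded to the $R$- or $C$-norm through the rescaled equivalences $\|\cdot\|_2\le\|\cdot\|_R$, $\|\cdot\|_2\le\|\cdot\|_C$ and the cross inequalities ($\delta_{R,C}$, $\delta_{C,2}$) of Lemma \ref{lem: norm equivalence}.

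One auxiliary estimate supplies all of the off-diagonal coupling. Splitting $\oy_k=g_k+(\oy_k-g_k)$ and invoking both bounds of Lemma \ref{lem: Lipschitz implications} yields
\[\|\oy_k\|_2\le \tfrac{L}{\sqrt{n}}\|\mx_k-\mathbf{1}\ox_k\|_2+L\|\ox_k-x^*\|_2,\]
so $\|\oy_k\|_2$ is controlled by the first two state components; this is the quantity that generates every below-diagonal entry in the first two columns of $A$. For the first row I bound \eqref{ox pre} term by term: the contraction $\|\ox_k-\alpha'g_k-x^*\|_2\le(1-\alpha'\mu)\|\ox_k-x^*\|_2$ of Lemma \ref{lem: Lipschitz implications} (valid since $\alpha'\le 2/(\mu+L)$) gives $a_{11}$; the tracking error $\alpha'\|\oy_k-g_k\|_2\le \alpha'\tfrac{L}{\sqrt{n}}\|\mx_k-\mathbf{1}\ox_k\|_2$ gives $a_{12}$; and the Cauchy--Schwarz bound $\|(u-\mathbf{1})^{\T}(\my_k-v\oy_k)\|_2\le\|u-\mathbf{1}\|_2\|\my_k-v\oy_k\|_2$ (read off columnwise via Definition \ref{def: norm n p}) gives $a_{13}$.

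The second row follows once \eqref{mx-ox pre} is rewritten in the state variable through $\my_k-\mathbf{1}\oy_k=(\my_k-v\oy_k)+(v-\mathbf{1})\oy_k$; taking $\|\cdot\|_R$, pulling out $\sigma_R$, using $\|(v-\mathbf{1})\oy_k\|_R=\|v-\mathbf{1}\|_R\|\oy_k\|_2$ and $\|\my_k-v\oy_k\|_R\le\delta_{R,C}\|\my_k-v\oy_k\|_C$, and inserting the auxiliary estimate reproduces $a_{21},a_{22},a_{23}$ exactly.

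The third row is where the bookkeeping is heaviest and is the step I expect to be the main obstacle. From \eqref{my-oy pre} and the factor $\sigma_C$ the task reduces to bounding $\|\nabla F(\mx_{k+1})-\nabla F(\mx_k)\|_C\le\delta_{C,2}L\|\mx_{k+1}-\mx_k\|_2$, where the Lipschitz bound holds rowwise and aggregates through Definition \ref{def: norm n p}. I then expand the $x$-update and align it with the state directions via $(R-I)\mathbf{1}=0$ and $\my_k=(\my_k-v\oy_k)+v\oy_k$:
\[\mx_{k+1}-\mx_k=(R-I)(\mx_k-\mathbf{1}\ox_k)-\alpha R(\my_k-v\oy_k)-\alpha Rv\,\oy_k.\]
Taking $\|\cdot\|_2$, applying Lemma \ref{lem: matrix norm production} so that $\|Rv\,\oy_k\|_2=\|Rv\|_2\|\oy_k\|_2$, and substituting the auxiliary bound on $\|\oy_k\|_2$ produces three coefficients involving $\|R-I\|_2$, $\|Rv\|_2$, and $\|R\|_2$; multiplying through by $\sigma_C\delta_{C,2}L$ and downgrading the remaining $2$-norms yields $a_{31},a_{32},a_{33}$. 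The only real care required is to attribute the correct operator constant to each state direction and to keep the $\alpha$-independent diagonal term $\sigma_C$ separate from the $O(\alpha)$ contributions.
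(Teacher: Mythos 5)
Your proposal is correct and follows essentially the same route as the paper's own proof: it starts from the same three recursions \eqref{ox pre}, \eqref{mx-ox pre}, \eqref{my-oy pre}, uses the same splittings $\my_k-\mathbf{1}\oy_k=(\my_k-v\oy_k)+(v-\mathbf{1})\oy_k$ and $\my_k=(\my_k-v\oy_k)+v\oy_k$, and the same auxiliary bound $\|\oy_k\|_2\le \frac{L}{\sqrt{n}}\|\mx_k-\mathbf{1}\ox_k\|_2+L\|\ox_k-x^*\|_2$ from Lemma \ref{lem: Lipschitz implications}, with the norms upgraded via Lemmas \ref{lem: matrix norm production} and \ref{lem: norm equivalence} exactly as in the appendix. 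Every coefficient you derive matches the corresponding entry of $A$, so there is nothing to correct.
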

	\begin{proof}
		See Appendix \ref{proof: important inequalities}.
	\end{proof}
	
	In light of Lemma \ref{lem: important inequalities}, $\|\ox_k-x^*\|_2$, $\|\mx_k-\mathbf{1}\ox_k\|_R$ and $\|\my_k-v\oy_k\|_C$ all converge to $0$ linearly at rate $\mathcal{O}(\rho_A^k)$ if the spectral radius of $A$ satisfies $\rho_A<1$. 
	The next lemma provides some sufficient conditions for the relation $\rho_A<1$ to hold.
	\begin{lemma}
		\label{lem: rho_M}
		Given a nonnegative, irreducible matrix $M=[m_{ij}]\in\mathbb{R}^{3\times 3}$ with $m_{11},m_{22},m_{33}<\lambda^*$ for some $\lambda^*>0$. A necessary and sufficient condition for $\rho_M<\lambda^*$ is $\text{det}(\lambda^* I-M)>0$.
	\end{lemma}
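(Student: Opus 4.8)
The plan is to prove the two implications separately, reducing everything to a matrix with zero diagonal whose spectrum is tightly controlled by Perron--Frobenius theory. For the easy (``only if'') direction I would factor the characteristic polynomial of $M$ as $\det(\lambda^* I - M) = \prod_{i=1}^3(\lambda^*-\lambda_i)$, where $\lambda_1=\rho_M$ is the Perron root (real, simple, and of largest modulus by nonnegativity and irreducibility). If $\rho_M<\lambda^*$, then each real eigenvalue satisfies $\lambda^*-\lambda_i\ge \lambda^*-\rho_M>0$, while each complex-conjugate pair $a\pm b\,\mathrm{i}$ contributes $(\lambda^*-a)^2+b^2>0$, so the product is strictly positive. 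This direction does not use the diagonal hypothesis.

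The substance is the converse, and the main obstacle is that a monic cubic can be strictly positive at a point strictly below its largest real root, namely on the interval between its two smaller real roots. Ruling this out is exactly where $m_{ii}<\lambda^*$ must enter, and my plan is to exploit it through a diagonal scaling. Write $M=\mathrm{diag}(m_{ii})+N$ with $N\ge 0$ having zero diagonal, set $D:=\mathrm{diag}(\lambda^*-m_{ii})$ (a positive diagonal matrix, since $m_{ii}<\lambda^*$), and factor
\[
\lambda^* I - M = D\,(I-P), \qquad P:=D^{-1}N .
\]
Then $P$ is nonnegative, has zero diagonal, and is irreducible, because it shares the off-diagonal zero pattern of $M$, whose digraph is strongly connected. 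Since $\det D=\prod_i(\lambda^*-m_{ii})>0$, the sign of $\det(\lambda^* I-M)$ equals the sign of $\det(I-P)$.

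Next I would analyze $\det(I-P)=\prod_i(1-\mu_i)$, where the $\mu_i$ are the eigenvalues of $P$. Because $P$ is irreducible and nonnegative, $\mu_1=\rho_P>0$ is the simple Perron root and dominates the others in modulus; because $P$ has zero diagonal, $\mathrm{trace}(P)=\mu_1+\mu_2+\mu_3=0$. This trace identity is the crux: if $\mu_2,\mu_3$ form a complex pair $a\pm b\,\mathrm{i}$ then $2a=-\rho_P$, so $a<0$ and $(1-\mu_2)(1-\mu_3)=(1-a)^2+b^2>0$; if instead they are real, then $\mu_2+\mu_3=-\rho_P$ together with $|\mu_2|,|\mu_3|\le\rho_P$ forces both to be $\le 0$, so again $(1-\mu_2)(1-\mu_3)>0$. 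In either case $\det(I-P)=(1-\rho_P)(1-\mu_2)(1-\mu_3)$ has the same sign as $1-\rho_P$, whence $\det(I-P)>0$ if and only if $\rho_P<1$.

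Finally I would close the loop by showing $\rho_M<\lambda^*$ if and only if $\rho_P<1$, using the standard semipositivity characterization of nonsingular M-matrices for the Z-matrices $\lambda^* I-M$ and $I-P$: since $\lambda^* I-M=D(I-P)$ with $D$ positive diagonal, a vector $x>0$ satisfies $(\lambda^* I-M)x>0$ if and only if $(I-P)x>0$, so $\lambda^* I-M$ is a nonsingular M-matrix (equivalently $\rho_M<\lambda^*$) if and only if $I-P$ is (equivalently $\rho_P<1$). Chaining the equivalences $\det(\lambda^* I-M)>0 \Leftrightarrow \det(I-P)>0 \Leftrightarrow \rho_P<1 \Leftrightarrow \rho_M<\lambda^*$ gives the claim. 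The step I would verify most carefully is the spectral bookkeeping for $P$ --- the simplicity and dominance of $\rho_P$, and that zero trace genuinely forces the non-Perron real eigenvalues to be nonpositive --- since this is precisely the mechanism that converts the diagonal hypothesis into the needed sign.
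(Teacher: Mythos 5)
Your proposal is correct, but it takes a genuinely different route from the paper. The paper works directly with the cubic characteristic polynomial $g(\lambda)=\det(\lambda I-M)$: necessity follows from the intermediate value theorem (since $g(\lambda)\to+\infty$, $g(\lambda^*)\le 0$ would force a real root, hence an eigenvalue, at some $\lambda\ge\lambda^*$); for sufficiency it splits the inequality $g(\lambda^*)>0$ into three weighted inequalities with coefficients $\gamma_1,\gamma_2,\gamma_3$, uses these to show $g'(\lambda)>0$ on $(-\infty,-\lambda^*]\cup[\lambda^*,+\infty)$ together with $g(-\lambda^*)<0$, concludes that all real roots lie in $(-\lambda^*,\lambda^*)$, and finishes by Perron--Frobenius, which guarantees $\rho_M$ is itself a real root. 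You instead factor $\lambda^* I-M=D(I-P)$ with $D$ a positive diagonal matrix and $P$ nonnegative, irreducible, and zero-diagonal, observe that $\mathrm{trace}(P)=0$ forces the non-Perron eigenvalues of $P$ to contribute a strictly positive factor to $\det(I-P)$ (so that $\det(I-P)$ has the sign of $1-\rho_P$), and then transfer $\rho_P<1$ to $\rho_M<\lambda^*$ via the semipositivity characterization of nonsingular M-matrices, which is compatible with the positive-diagonal factor $D$. Each approach has its merits: the paper's argument is self-contained elementary calculus on a cubic and needs no M-matrix machinery, while yours isolates exactly where the hypothesis $m_{ii}<\lambda^*$ enters (it makes $D$ positive and kills the trace of $P$) and in fact yields the sharper trichotomy that $\det(\lambda^* I-M)$ is positive, zero, or negative according as $\rho_M$ is less than, equal to, or greater than $\lambda^*$, at the cost of citing the standard M-matrix equivalences. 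Two small observations, neither a gap: in your complex-pair case the factor $(1-a)^2+b^2$ is positive irrespective of the trace, so the zero-trace identity is genuinely needed only in the all-real case; and simplicity of $\rho_P$ plays no role in your argument, only that $\rho_P$ is a real eigenvalue dominating the others in modulus. Both proofs are intrinsically three-dimensional --- your trace argument fails already for $4\times 4$ matrices (e.g., a spectrum such as $\{3,2,-2.5,-2.5\}$ has zero trace yet $\det(I-P)>0$ with $\rho_P>1$), which matches the fact that the lemma itself does not extend beyond $n=3$.
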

	\begin{proof}
		See Appendix \ref{subsec: proof lemma rho_M}.
	\end{proof}
	
	Now, we are ready to deliver our main convergence result for the Push-Pull algorithm in~\eqref{algorithm RC}.
	
	\begin{theorem}\label{theory:main}
		Suppose Assumptions \ref{asp; strconvex Lipschitz}-\ref{asp: nonempty root set} hold and
		\begin{equation}
		\alpha \le \min\left\{\frac{2c_3}{c_2+\sqrt{c_2^2+4c_1c_3}}, \frac{(1-\sigma_C)}{2\sigma_C\delta_{C,2}\|R\|_2 L}\right\},
		\end{equation}
		where $c_1,c_2,c_3$ are given in (\ref{c1})-(\ref{c3}).
		Then, the quantities $\|\ox_k-x^*\|_2$, $\|\mx_k-\mathbf{1}\ox_k\|_R$ and $\|\my_k-v\oy_k\|_C$ all converge to $0$ at the linear rate $\mathcal{O}(\rho_A^k)$ 
		with $\rho_A<1$, where $\rho_A$ denotes the spectral radius of $A$. 
	\end{theorem}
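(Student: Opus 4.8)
The plan is to combine the linear system of inequalities from Lemma~\ref{lem: important inequalities} with the spectral-radius criterion of Lemma~\ref{lem: rho_M}. Write $z_k := [\|\ox_k-x^*\|_2,\ \|\mx_k-\mathbf{1}\ox_k\|_R,\ \|\my_k-v\oy_k\|_C]^{\T}\ge 0$, so that Lemma~\ref{lem: important inequalities} reads $z_{k+1}\le A z_k$ component-wise with $A\ge 0$. Iterating and using nonnegativity gives $z_k\le A^k z_0$, hence $\|z_k\|_2\le \|A^k\|_2\,\|z_0\|_2$. Since, for any matrix, $\limsup_k\|A^k\|_2^{1/k}=\rho_A$, each of the three error quantities decays at the rate $\mathcal{O}(\rho_A^k)$ as soon as we establish $\rho_A<1$. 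Thus the entire theorem reduces to proving $\rho_A<1$ under the stated step-size bound.

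To obtain $\rho_A<1$ I would apply Lemma~\ref{lem: rho_M} with $\lambda^*=1$, verifying its hypotheses in turn. First, $A$ is nonnegative: every off-diagonal entry is a product of nonnegative quantities, and $a_{11}=1-\alpha'\mu\ge 0$ because the step-size bound forces $\alpha'\le 2/(\mu+L)\le 1/\mu$ (using $\mu\le L$), which also makes Lemma~\ref{lem: Lipschitz implications} applicable as required by Lemma~\ref{lem: important inequalities}. Second, $A$ is irreducible: the entries $a_{12}=\alpha'L/\sqrt{n}$, $a_{23}=\alpha\sigma_R\delta_{R,C}$ and $a_{31}=\alpha\sigma_C\delta_{C,2}\|Rv\|_2 L^2$ are strictly positive (here $\alpha'>0$ follows from Assumption~\ref{asp: nonempty root set} via Lemma~\ref{lem: condition u v >0}), so the associated digraph contains the $3$-cycle $1\to 2\to 3\to 1$ and is strongly connected. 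Third, the diagonal entries lie below $1$: $a_{11}=1-\alpha'\mu<1$ is immediate; $a_{33}=\sigma_C(1+\alpha\delta_{C,2}\|R\|_2 L)<1$ follows from the second term of the step-size minimum, which yields $a_{33}\le(1+\sigma_C)/2<1$; and $a_{22}=\sigma_R(1+\alpha\|v-\mathbf{1}\|_R L/\sqrt{n})<1$ holds for $\alpha$ within the first bound.

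It then remains to show $\det(I-A)>0$. The key structural observation is that the entire first row of $I-A$, namely $[\alpha'\mu,\ -\alpha'L/\sqrt{n},\ -\alpha\|u-\mathbf{1}\|_2/n]$, is proportional to $\alpha$ (recall $\alpha'=\alpha\,u^{\T}v/n$). Expanding the determinant along this row and factoring out $\alpha$ therefore produces
\[
\det(I-A)=\alpha\bigl(c_3-c_2\alpha-c_1\alpha^2\bigr),
\]
whose quadratic cofactor I would read off coefficient-by-coefficient. Evaluating at $\alpha=0$ isolates the positive constant term $c_3=\mu\frac{u^{\T}v}{n}(1-\sigma_R)(1-\sigma_C)>0$, while the linear and quadratic contributions collect into $c_2,c_1>0$; these are exactly the constants named in the theorem. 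Consequently, for $\alpha>0$, $\det(I-A)>0$ is equivalent to $c_1\alpha^2+c_2\alpha-c_3<0$, i.e.\ to $\alpha$ lying below the unique positive root of this quadratic, which rationalizes to $\alpha<2c_3/(c_2+\sqrt{c_2^2+4c_1c_3})$ — precisely the first term of the step-size minimum. Lemma~\ref{lem: rho_M} then delivers $\rho_A<1$, completing the argument.

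The main obstacle is the determinant bookkeeping in the previous paragraph: carrying out the cofactor expansion, tracking which products contribute at orders $\alpha^0,\alpha^1,\alpha^2$ once the common factor $\alpha$ is removed, and — most delicately — verifying that the collected coefficients $c_1$ and $c_2$ are genuinely positive, i.e.\ that the destabilizing second-order (consensus-error) couplings dominate the positive cross terms. A secondary point to pin down is that the first step-size bound also secures the diagonal condition $a_{22}<1$, so that all hypotheses of Lemma~\ref{lem: rho_M} are simultaneously in force.
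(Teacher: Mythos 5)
Your overall route coincides with the paper's: Lemma~\ref{lem: important inequalities} reduces everything to $\rho_A<1$, and $\rho_A<1$ is to come from Lemma~\ref{lem: rho_M} with $\lambda^*=1$ by checking $a_{11},a_{22},a_{33}<1$ and $\det(I-A)>0$, ending in a quadratic inequality in $\alpha$. Two side points are correct and even more careful than the paper: you verify irreducibility of $A$ explicitly (via the cycle $1\to2\to3\to1$ and $\alpha'>0$ from Lemma~\ref{lem: condition u v >0}), and your claim that the first term of the step-size minimum already secures $a_{22}\le(1+\sigma_R)/2$ is true --- use $2c_3/(c_2+\sqrt{c_2^2+4c_1c_3})\le c_3/c_2$, retain only the term $\sigma_R\|v-\mathbf{1}\|_R\frac{L^2}{\sqrt{n}}(1-\sigma_C)\frac{u^{\T}v}{n}$ of $c_2$, and invoke $\mu\le L$; this is precisely why the $\sigma_R$-term of (\ref{alpha loose condition}) can be dropped from the theorem statement. (A pedantic aside: $\limsup_k\|A^k\|_2^{1/k}=\rho_A$ only yields $\mathcal{O}((\rho_A+\epsilon)^k)$; to get the stated $\mathcal{O}(\rho_A^k)$ one should appeal to the simplicity of the peripheral eigenvalues of the irreducible nonnegative matrix $A$.)

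The determinant step, however, contains a genuine error, and it sits exactly where you flagged the ``main obstacle.'' The factorization $\det(I-A)=\alpha\,(c_3-c_2\alpha-c_1\alpha^2)$ with the paper's constants is false. The first row of $I-A$ is indeed proportional to $\alpha$, but $a_{22}$ and $a_{33}$ are themselves affine in $\alpha$, so the quadratic $q(\alpha):=\det(I-A)/\alpha$ has constant term
\begin{equation*}
q(0)=\mu\frac{u^{\T}v}{n}(1-\sigma_R)(1-\sigma_C)=4c_3,
\end{equation*}
not $c_3$: definition (\ref{c3}) carries the factor $\frac{u^{\T}v}{4n}$, which you misquoted as $\frac{u^{\T}v}{n}$. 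That $\frac14$ is not cosmetic. It records the substitutions $1-a_{22}\ge\frac12(1-\sigma_R)$ and $1-a_{33}\ge\frac12(1-\sigma_C)$ in the positive product $(1-a_{11})(1-a_{22})(1-a_{33})$ of (\ref{|I-A|>0}), substitutions valid only under the auxiliary bound (\ref{alpha loose condition}) --- which is the true role of the second term of the min, not merely securing $a_{33}<1$; symmetrically, in the negative terms the paper replaces $1-a_{22}$ and $1-a_{33}$ by the upper bounds $1-\sigma_R$ and $1-\sigma_C$. Only after these one-sided substitutions does $\det(I-A)>0$ reduce to $c_1\alpha^2+c_2\alpha-c_3<0$, and then only as a \emph{sufficient} condition, not the equivalence you assert. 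Your exact cofactor expansion would instead produce a quadratic whose linear coefficient absorbs the cross terms $-\mu\frac{u^{\T}v}{n}\bigl[(1-\sigma_R)\sigma_C\delta_{C,2}\|R\|_2L+(1-\sigma_C)\sigma_R\|v-\mathbf{1}\|_R\frac{L}{\sqrt{n}}\bigr]$ from the triple product, and whose quadratic coefficient even contains \emph{positive} pieces (e.g., $+\mu\frac{u^{\T}v}{n}\sigma_R\sigma_C\|v-\mathbf{1}\|_R\delta_{C,2}\|R\|_2\frac{L^2}{\sqrt{n}}$, plus the terms obtained by expanding $1-a_{22}$ inside $-a_{13}a_{31}(1-a_{22})$ and $1-a_{33}$ inside $-a_{12}a_{21}(1-a_{33})$): these coefficients are not $-c_2,-c_1$, their sign pattern is not what you assumed, and the positive root of your exact quadratic is not the threshold stated in the theorem. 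To repair the argument you must either carry out the analysis with the exact coefficients and then prove the stated bound lies below the resulting root, or adopt the paper's substitution device, which is what makes the bookkeeping close.
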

	
	\begin{proof}
		In light of Lemma \ref{lem: rho_M}, it suffices to ensure $a_{11},a_{22},a_{33}<1$ and $\text{det}(I-A)>0$, or equivalently
		{\small\begin{multline}
			\label{|I-A|>0}
			\text{det}(I-A)=(1-a_{11})(1-a_{22})(1-a_{33})-a_{12}a_{23}a_{31}\\
			-a_{13}a_{21}a_{32}-(1-a_{22})a_{13}a_{31}-(1-a_{11})a_{23}a_{32}-(1-a_{33})a_{12}a_{21}\\
			=(1-a_{11})(1-a_{22})(1-a_{33})
			-\alpha'\alpha^2\sigma_R\sigma_C\delta_{R,C}\delta_{C,2}\|Rv\|_2\frac{L^3}{\sqrt{n}}\\
			-\alpha^2\sigma_R\sigma_C\delta_{C,2} \|u-\mathbf{1}\|_2\|v-\mathbf{1}\|_R\left(\|R-I\|_2+\alpha\|Rv\|_2\frac{L}{\sqrt{n}}\right)\frac{L^2}{n}\\
			-\alpha^2\sigma_C\delta_{C,2}\|Rv\|_2\|u-\mathbf{1}\|_2\frac{L^2}{n}(1-a_{22})\\
			-\alpha\sigma_R\sigma_C \delta_{R,C}\delta_{C,2}L\left(\|R-I\|_2+\alpha\|Rv\|_2\frac{L}{\sqrt{n}}\right)(1-a_{11})\\
			-\alpha'\alpha\sigma_R \|v-\mathbf{1}\|_R\frac{L^2}{\sqrt{n}}(1-a_{33})>0.
			\end{multline}}\normalsize
		We now provide some sufficient conditions under which $a_{11},a_{22},a_{33}<1$ and (\ref{|I-A|>0}) holds true.
		First, $a_{11}<1$ is ensured by choosing $\alpha'\le 2/(\mu+L)$. let
		\begin{eqnarray}
		1-a_{22} & \ge & \frac{1}{2}(1-\sigma_R),\\
		1-a_{33} & \ge & \frac{1}{2}(1-\sigma_C).
		\end{eqnarray}
		We get
		\begin{equation}
		\label{alpha loose condition}
		\alpha\le \min\left\{\frac{(1-\sigma_R)\sqrt{n}}{2\sigma_R\|v-\mathbf{1}\|_R L},\frac{(1-\sigma_C)}{2\sigma_C\delta_{C,2}\|R\|_2 L}\right\}.
		\end{equation}
		
		Second, notice that $a_{22}>\sigma_R$ and $a_{33}>\sigma_C$. A sufficient condition for $\text{det}(I-A)>0$ is to substitute the first $(1-a_{22})$ (respectively, $(1-a_{33})$) in (\ref{|I-A|>0}) by $(1-\sigma_R)/2$ (respectively, $(1-\sigma_C)/2$), and substitute the second $(1-a_{22})$ (respectively, $(1-a_{33})$) by $(1-\sigma_R)$ (respectively, $(1-\sigma_C)$). We have 
		\begin{equation*}
		c_1\alpha^2+c_2\alpha-c_3<0,
		\end{equation*}
		where
		\begin{multline}
		\label{c1}
		c_1 = \frac{u^{\T} v}{n}\sigma_R\sigma_C\delta_{R,C}\delta_{C,2}\|Rv\|_2\frac{L^3}{\sqrt{n}}\\
		+\sigma_R\sigma_C\delta_{C,2} \|u-\mathbf{1}\|_2\|v-\mathbf{1}\|_R\|Rv\|_2\frac{L}{\sqrt{n}}\frac{L^2}{n}\\
		+\frac{u^{\T} v}{n}\mu\sigma_R\sigma_C \delta_{R,C}\delta_{C,2}L\|Rv\|_2\frac{L}{\sqrt{n}}\\
		=\sigma_R\sigma_C\delta_{C,2}\|Rv\|_2\frac{L^2}{n\sqrt{n}}\left[u^{\T} v\delta_{R,C}(L+\mu)\right.\\
		\left.+\|u-\mathbf{1}\|_2\|v-\mathbf{1}\|_R L\right],
		\end{multline}
		\begin{multline}
		\label{c2}
		c_2=\sigma_R\sigma_C\delta_{C,2} \|u-\mathbf{1}\|_2\|v-\mathbf{1}\|_R\|R-I\|_2\frac{L^2}{n}\\
		+\sigma_C\delta_{C,2}\|Rv\|_2\|u-\mathbf{1}\|_2(1-\sigma_R)\frac{L^2}{n}\\
		+\sigma_R\sigma_C \delta_{R,C}\delta_{C,2}L\|R-I\|_2\frac{u^{\T} v}{n}\mu\\
		+\sigma_R \|v-\mathbf{1}\|_R\frac{L^2}{\sqrt{n}}(1-\sigma_C)\frac{u^{\T} v}{n},
		\end{multline}
		and 
		\begin{equation}
		\label{c3}
		c_3=\frac{u^{\T} v}{4n}\mu(1-\sigma_R)(1-\sigma_C).
		\end{equation}
		Hence
		\begin{equation}
		\label{alpha strict condition}
		\alpha\le \frac{2c_3}{c_2+\sqrt{c_2^2+4c_1c_3}}.
		\end{equation}
		Relations (\ref{alpha loose condition}) and (\ref{alpha strict condition}) yield the final bound on $\alpha$.
	\end{proof}
	\begin{remark}
		When $\alpha$ is sufficiently small, it can be shown that $\rho_A\simeq 1-\alpha'\mu$, in which case the Push-Pull algorithm is comparable to its centralized counterpart with step-size $\alpha'$.
	\end{remark}
	\section{SIMULATIONS}
	\label{sec: simulation}
	In this section, we provide numerical comparisons of a few different algorithms under various network settings. Our settings for objective functions are the same as that described in \cite{nedic2017achieving}. Each node in the network holds a Huber-typed objective function $f_i(x)$ and the goal is to optimize the total Huber loss $f(x)=\sum_{i=1}^n f_i(x)$. The objective functions $f_i$'s are randomly generated but are manipulated such that the global optimizer $x^*$ is located at the $\ell_2^2$ zone of $f(x)$ while the origin (which is set to be the initial state of $\bx_k$ for all involved algorithms) is located outside of that zone. 
	
	We first conduct an experiment over time-invariant directed graphs. The network is generated randomly with $12$ nodes and $24$ unidirectional links (at most $12\times11=131$ possible links in this case) and is guaranteed to be strongly connected. We test our proposed algorithm, Push-Pull, against Push-DIGing \cite{nedic2017achieving} and Xi-Row \cite{xi2018linear}. Among these algorithms, Push-DIGing is a push-sum based algorithm which only needs push operations for information dissemination in the network; Xi-row is an algorithm that only uses row stochastic mixing matrices and thus only needs pull operations to fetch information in the network; in comparison, our algorithm needs the network to support both push operations and pull operations. The per-node storage complexity of Push-Pull (or Push-DIGing) is $O(p)$ while that of Xi-row is $O(n+p)$. Note that at each iteration, the amount of data transmitted over each link also scales at such orders for these algorithms, respectively. For large-scale networks ($n\gg p$), Xi-row may suffer from high needs in storage/bandwidth and/or become under limited transmission rates. The evolution of the (normalized) residual $\frac{\|\bx_k-\bx^*\|_2^2}{\|\bx_0-\bx^*\|_2^2}$ is illustrated in Fig. \ref{eps:Conv_TI}. The step-sizes are hand-tuned for all the algorithms to optimize the convergence speed.
	\begin{figure}[h]
		\begin{center}
			\centering
			\includegraphics[width=3.2in]{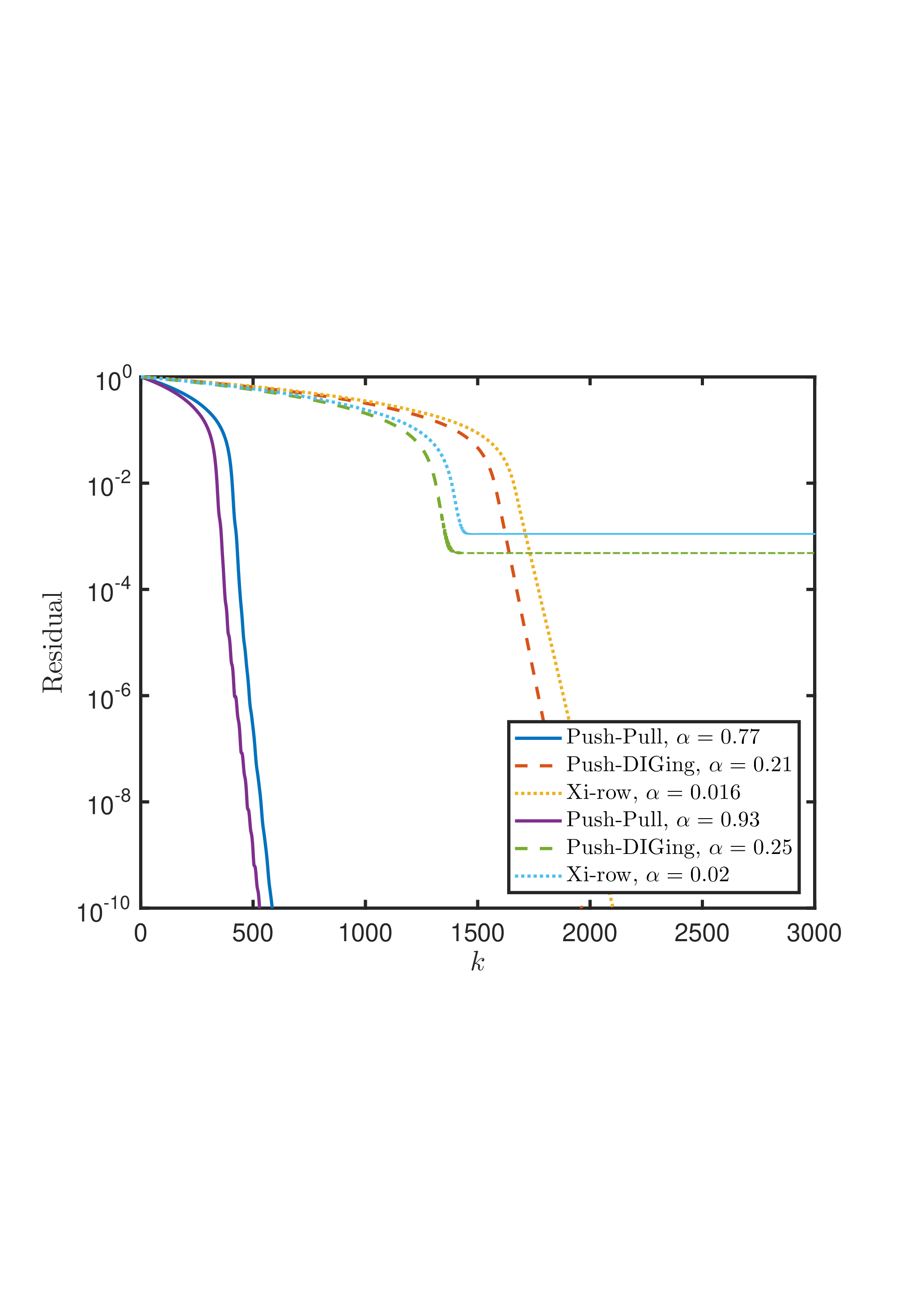}
			\caption{Plots of (normalized) residuals against number of iterations over a time-invariant directed graph.\label{eps:Conv_TI}}
		\end{center}
	\end{figure}
	
	Although our algorithm is designed and analyzed over time-invariant directed graphs, its extension to time-varying directed graphs is straightforward. Let us use the above generated directed graph as a base graph. To test our theory for a leader-follower architecture, we randomly select multiple nodes as leaders and randomly add enough links between the leaders (in this example, number of leaders is $2$) so that they form a strongly connected subnet. Then at each iteration, only $50\%$ randomly chosen links will be activated. In Fig. \ref{eps:Conv_TV}, we plot the performance of Push-Pull-half (a variant of Push-Pull where the ATC strategy is not employed in the $\by$-update; it needs only one round of communication at each iteration) and Push-DIGing without considering the leader-follower structure. That is, for Push-Pull-half and Push-DIGing, a time-varying directed graph sequence based on random link activation is used where the underlying graph is strongly connected. 
	\begin{figure}[h]
			\centering
			\includegraphics[width=3.2in]{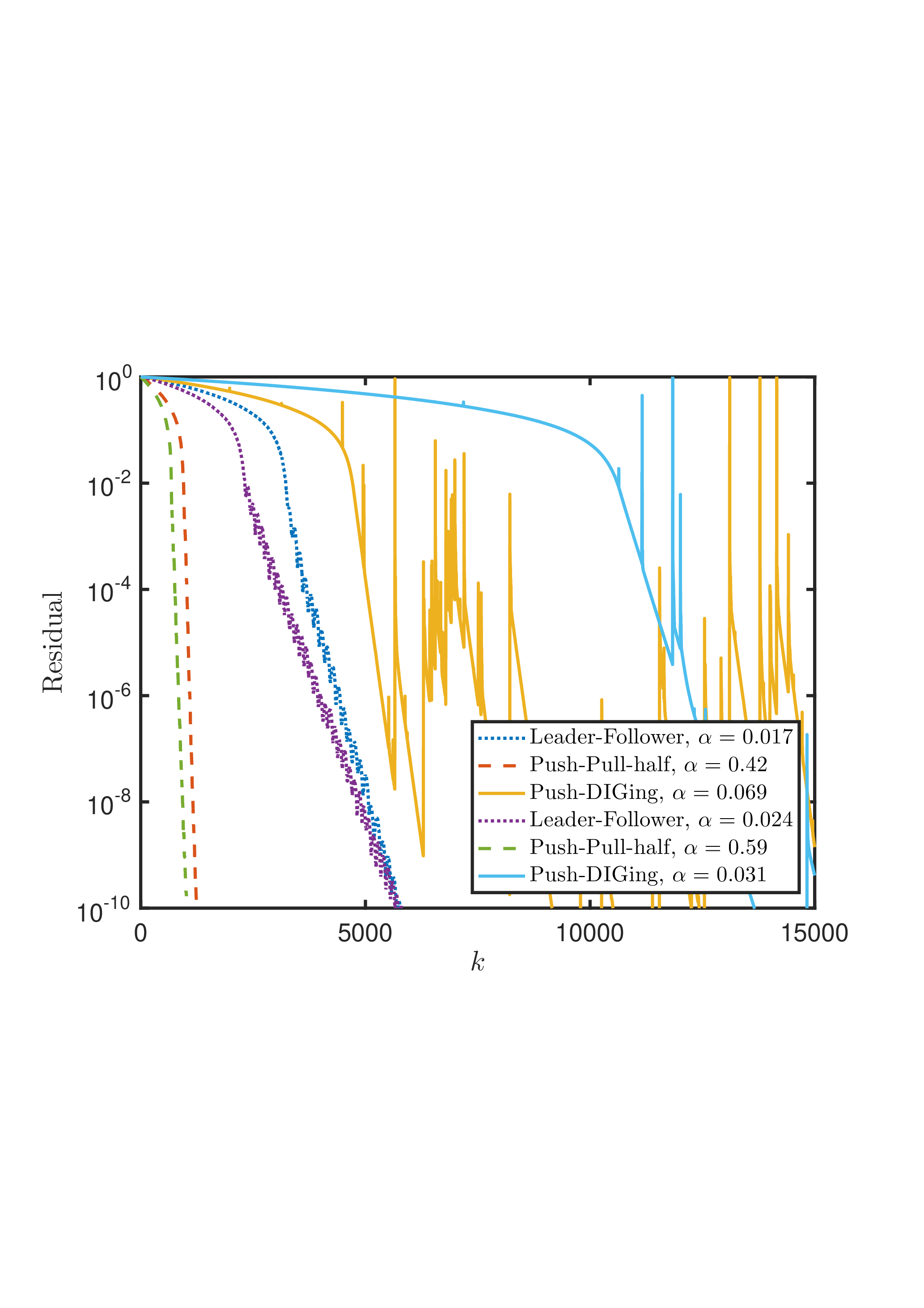}
			\caption{Plots of (normalized) residuals against number of iterations over a time-varying directed graph sequence. \label{eps:Conv_TV}}
	\end{figure}
	
	Then in Fig. \ref{eps:Conv_TV} we further show the performance of Push-Pull under the leader-follower architecture. The major difference on the graph sequence is that, in the leader-follower architecture, all the outbound information links of the leader subnet are not used when performing the $\by$-update; all the inbound information links of the leader subnet are not used when performing the $\bx$-update. Note that in such a way, the union of all directed graphs corresponding to $R_k$ (or $C_k$) is not strongly connected. The numerical results show, as expected, that the convergence of Push-Pull under the leader-follower architecture is slower than that of Push-Pull-half with strongly connected underlying graphs.
	
	In the experiment, we observe that there are many spikes on the residual curve of Push-DIGing. Push-DIGing for time-varying graphs can be numerically unstable due to the use of division operations in the algorithm and the divisors can scale badly at the order of $\Omega(n^{-Bn})$ where $n$ is the number of nodes and $B$ a bounded constant that describes the connectivity of time-varying graphs (the smaller $B$ is, the better the network is connected; see \cite{nedic2017achieving} for the definition of $B$). A simple network with number of nodes $n=15$ and time-varying constant $B=10$ will easily give a number that is recognized by common computers (using double-precision floating-point format) as $0$. As a contrast, in Push-Pull, there is no divisors that scale at such level. In addition, the theoretical upper bound on the step-size of Push-DIGing also scales at the order of $O(n^{-Bn})$. This implies that Push-DIGing will not work well for either large scale networks or time-varying networks with large variations.
	
	\section{Conclusions}
	\label{sec: conclusion}
	In this paper, we have studied the problem of distributed optimization over a network.
	In particular, we proposed a new distributed gradient-based method (Push-Pull) where each node maintains estimates of the optimal decision variable and the average gradient of the agents' objective functions. From the viewpoint of an agent, the information about the 
	decision variable is pushed to its neighbors, while the information about the gradients is pulled from its neighbors. This method works for different types of distributed architecture, including decentralized, centralized, and semi-centralized  architecture. We have showed that the algorithm converges linearly for strongly convex and smooth objective functions over a directed static network. In the simulations, we have demonstrated the effectiveness of the proposed algorithm for both static and time-varying directed networks.
	


	

	
	\bibliographystyle{IEEEtran}
	\bibliography{mybib2}
	
	\section{APPENDIX}
	
	\subsection{Proof of Lemma \ref{lem: Lipschitz implications}}
	\label{subsec: proof lemma Lipschitz implications}
	In light of Assumption \ref{asp; strconvex Lipschitz} and (\ref{oy_k}),
	\begin{multline*}
	\|\oy_k-g_k\|_2=\frac{1}{n}\|\mathbf{1}^{\T}\nabla F(\mx_{k})-\mathbf{1}^{\T}\nabla F(\mathbf{1}\ox_k)\|_2\\
	\le \frac{L}{n}\sum_{i=1}^n \|x_{i,k}-\ox_k\|_2\le \frac{L}{\sqrt{n}}\|\mx_k-\mathbf{1}\ox_k\|_2,
	\end{multline*}
	and
	\begin{multline*}
	\|g_k\|_2=\frac{1}{n}\|\mathbf{1}^{\T}\nabla F(\mathbf{1}\ox_k)-\mathbf{1}^{\T}\nabla F(\mathbf{1}x^*)\|_2\\
	\le \frac{L}{n}\sum_{i=1}^n \|\ox_k-x^*\|_2=L\|\ox_k-x^*\|_2.
	\end{multline*}
	Proof of the last relation can be found in \cite{qu2017harnessing} Lemma 10.
	
	\subsection{Proof of Lemma \ref{lem: condition u v >0}}
	\label{subsec: proof condition u v >0}
	We first demonstrate that $u_i>0$ iff $i\in\mathcal{R}_R$. Note that there exists an order of vertices such that $R$ can be written as
	\begin{equation}
	\tilde{R}=\begin{Bmatrix}
	R_1 & \mathbf{0}\\
	R_2 & R_3
	\end{Bmatrix}
	\end{equation}
	where $R_1$ is a square matrix corresponding to vertices in $\mathcal{R}_R$. $R_1$ is row stochastic and irreducible (since the associated graph $\mathcal{G}_{R_1}$ is strongly connected). In light of the Perron-Frobenius theorem, $R_1$ has a strictly positive left eigenvector $u_1^{\T}$ ($u_1^{\T} \mathbf{1}=n$) corresponding to eigenvalue $1$. It follows that $[u_1, \mathbf{0}]^{\T}$ is a row eigenvector of $\tilde{R}$, which is also unique from the Perron-Frobenius theorem. Since reordering of vertices does not change the corresponding eigenvector (up to permutation in the same oder of vertices), $u_i>0$ iff $i\in\mathcal{R}_R$.
	
	Similarly, $v_j>0$ iff $j\in\mathcal{R}_{C^{\T}}$. 
	We conclude that $\mathcal{R}_R\cap \mathcal{R}_{C^{\T}}\neq \emptyset$ iff $u^{\T} v>0$.
	
	\subsection{Proof of Lemma \ref{lem: spectral radii}}
	\label{subsec: proof lemma spectral radii}
	In light of \cite[Lemma 3.4]{ren2005consensus}, under Assumptions \ref{asp: stochastic}-\ref{asp: nonempty root set}, spectral radii of $R$ and $C$ are both equal to $1$ (the corresponding eigenvalues have multiplicity $1$). Suppose for some $\lambda, \tilde{u}\neq0$,
	\begin{equation*}
	\tilde{u}^{\T}\left(R-\frac{\mathbf{1}u^{\T}}{n}\right)=\lambda\tilde{u}^{\T}.
	\end{equation*}
	Since $\mathbf{1}$ is a right eigenvector of $(R-\mathbf{1}u^{\T}/n)$ corresponding to eigenvalue $0$, $\tilde{u}^{\T}\mathbf{1}=0$ (see \cite{horn1990matrix} Theorem 1.4.7). We have
	\begin{equation*}
	\tilde{u}^{\T} R=\lambda\tilde{u}.
	\end{equation*}
	Hence $\lambda$ is also an eigenvalue of $R$. Noticing that $u^{\T}\mathbf{1}=n$, we have $\tilde{u}^{\T}\neq u^{\T}$ so that $\lambda<1$. We conclude that $\sigma_R<1$. Similarly we can obtain $\sigma_C<1$.
	
	\subsection{Proof of Lemma \ref{lem: matrix norm production}}
	\label{subsec: proof lemma matrix norm production}
	By Definition \ref{def: norm n p}, 
	\begin{multline}
	\|W\mx\|=\|[\|W\mx^1\|,\|W\mx^2\|,\ldots,\|W\mx^p\|]\|_2\\
	\le \|[\|W\|\|\mx^1\|,\|W\|\|\mx^2\|,\ldots,\|W\|\|\mx^p\|]\|_2\\
	=\|W\|\|[\|\mx^1\|,\|\mx^2\|,\ldots,\|\mx^p\|]\|_2=\|W\|\|\mx\|,
	\end{multline}
	and
	\begin{multline}
	\|wx\|=\|[\|wx^1\|,\|wx^2\|,\ldots,\|wx^p\|]\|_2\\
	=\|w\|\|[|x^1|,|x^2|,\ldots,|x^p|]\|_2=\|w\|\|x\|_2.
	\end{multline}
	
	\addtolength{\textheight}{-12cm}   
	
	\subsection{Proof of Lemma \ref{lem: important inequalities}}
	\label{proof: important inequalities}
	The three inequalities embedded in (\ref{main ineqalities}) come from (\ref{ox pre}), (\ref{mx-ox pre}), and (\ref{my-oy pre}), respectively.
	First, by Lemma \ref{lem: Lipschitz implications} and Lemma \ref{lem: norm equivalence},
	we obtain from (\ref{ox pre}) that
	{\small\begin{multline}
		\|\ox_{k+1}-x^*\|_2\le \|\ox_k-\alpha'g_k-x^*\|_2+\alpha'\|\oy_k-g_k\|_2\\
		+\frac{\alpha}{n}\|(u-\mathbf{1})^{\T}(\my_k-v\oy_k)\|_2\\
		\le (1-\alpha'\mu)\|\ox_k-x^*\|_2+\frac{\alpha'L}{\sqrt{n}}\|\mx_k-\mathbf{1}\ox_k\|_2\\
		+\frac{\alpha\|u-\mathbf{1}\|_2}{n}\|\my_k-v\oy_k\|_2\\
		\le (1-\alpha'\mu)\|\ox_k-x^*\|_2+\frac{\alpha'L}{\sqrt{n}}\|\mx_k-\mathbf{1}\ox_k\|_R\\
		+\frac{\alpha\|u-\mathbf{1}\|_2}{n}\|\my_k-v\oy_k\|_C.
		\end{multline}}\normalsize
	Second, by relation (\ref{mx-ox pre}), Lemma \ref{lem: matrix norm production} and Lemma \ref{lem: norm equivalence}, we see that
	{\small\begin{multline}
		\|\mx_{k+1}-\mathbf{1}\ox_{k+1}\|_R \le \sigma_R\|\mx_k-\mathbf{1}\ox_k\|_R+\alpha\sigma_R\|\my_k-\mathbf{1}\oy_k\|_R\\
		\le \sigma_R\|\mx_k-\mathbf{1}\ox_k\|_R+\alpha\sigma_R\|\my_k-v\oy_k\|_R+\alpha\sigma_R\|v-\mathbf{1}\|_R\|\oy_k\|_2\\
		\le \sigma_R\|\mx_k-\mathbf{1}\ox_k\|_R+\alpha\sigma_R\|\my_k-v\oy_k\|_R\\
		+\alpha\sigma_R\|v-\mathbf{1}\|_R\left(\frac{L}{\sqrt{n}}\|\mx_k-\mathbf{1}\ox_k\|_2+L\|\ox_k-x^*\|_2\right)\\
		\le \sigma_R\left(1+\alpha \|v-\mathbf{1}\|_R \frac{L}{\sqrt{n}}\right)\|\mx_k-\mathbf{1}\ox_k\|_R\\
		+\alpha\sigma_R\delta_{R,C}\|\my_k-v\oy_k\|_C
		+\alpha\sigma_R\|v-\mathbf{1}\|_R L\|\ox_k-x^*\|_2.
		\end{multline}}\normalsize
	Lastly, it follows from (\ref{my-oy pre}),  Lemma \ref{lem: matrix norm production} and Lemma \ref{lem: norm equivalence} that
	{\small\begin{multline}
		\|\my_{k+1}-v\oy_{k+1}\|_C \le \sigma_C\|\my_k-v\oy_k\|_C+\sigma_C \delta_{C,2}L\|\mx_{k+1}-\mx_k\|_2\\
		= \sigma_C\|\my_k-v\oy_k\|_C+\sigma_C \delta_{C,2}L\|(R-I)(\mx_k-\mathbf{1}\ox_k)-\alpha R\my_k\|_2\\
		\le \sigma_C\|\my_k-v\oy_k\|_C+\sigma_C \delta_{C,2}L\|R-I\|_2\|\mx_k-\mathbf{1}\ox_k\|_2\\
		+\alpha\sigma_C \delta_{C,2}L\left\|R(\my_k-v\oy_k)+Rv\oy_k\right\|_2\\
		\le \sigma_C\|\my_k-v\oy_k\|_C+\sigma_C \delta_{C,2}L\|R-I\|_2\|\mx_k-\mathbf{1}\ox_k\|_2\\
		+\alpha\sigma_C \delta_{C,2}L\left(\|R\|_2\|\my_k-v\oy_k\|_2+\|Rv\|_2\|\oy_k\|_2\right).
		\end{multline}}\normalsize
	In light of Lemma \ref{lem: Lipschitz implications},
	{\small
		\begin{equation}
		\|\oy_k\|_2\le \left(\frac{L}{\sqrt{n}}\|\mx_k-\mathbf{1}\ox_k\|_2+L\|\ox_k-x^*\|_2\right).
		\end{equation}}\normalsize
	Hence
	{\small\begin{multline}
		\|\my_{k+1}-v\oy_{k+1}\|_C \le\sigma_C\left(1+\alpha \delta_{C,2}\|R\|_2 L\right)\|\my_k-v\oy_k\|_C\\
		+\sigma_C \delta_{C,2} L\|R-I\|_2\|\mx_k-\mathbf{1}\ox_k\|_2\\
		+\alpha\sigma_C \delta_{C,2}\|Rv\|_2L\left(\frac{L}{\sqrt{n}}\|\mx_k-\mathbf{1}\ox_k\|_2+L\|\ox_k-x^*\|_2\right)\\
		\le \sigma_C\left(1+\alpha \delta_{C,2}\|R\|_2 L\right)\|\my_k-v\oy_k\|_C\\
		+\sigma_C \delta_{C,2} L\left(\|R-I\|_2+\alpha \|Rv\|_2\frac{L}{\sqrt{n}}\right)\|\mx_k-\mathbf{1}\ox_k\|_R\\
		+\alpha\sigma_C \delta_{C,2}\|Rv\|_2 L^2\|\ox_k-x^*\|_2.
		\end{multline}}
	
		\subsection{Proof of Lemma \ref{lem: rho_M}}
	\label{subsec: proof lemma rho_M}
	The characteristic function of $M$ is given by
	\begin{multline}
	g(\lambda):=\text{det}(\lambda I-M)=(\lambda-m_{11})(\lambda-m_{22})(\lambda-m_{33})\\
	-a_{23}a_{32}(\lambda-m_{11})-a_{13}a_{31}(\lambda-m_{22})-a_{12}a_{21}(\lambda-m_{33})\\
	-a_{12}a_{23}a_{31}-a_{13}a_{32}a_{21}.
	\end{multline}
	Necessity is trivial since $\text{det}(\lambda^* I-M)\le 0$ implies $g(\lambda)=0$ for some $\lambda\ge \lambda^*$. We now show $\text{det}(\lambda^* I-M)>0$ is also a sufficient condition.
	
	Given that $g(\lambda^*)=\text{det}(\lambda^* I-M)>0$,
	\begin{multline*}
	(\lambda^*-m_{11})(\lambda^*-m_{22})(\lambda^*-m_{33})\\
	> a_{23}a_{32}(\lambda^*-m_{11})+a_{13}a_{31}(\lambda^*-m_{22})+a_{12}a_{21}(\lambda^*-m_{33}).
	\end{multline*}
	It follows that
	\begin{equation}
	\begin{array}{ccc}
	\gamma_1(\lambda^*-m_{22})(\lambda^*-m_{33}) & > & a_{23}a_{32}\\
	\gamma_2(\lambda^*-m_{11})(\lambda^*-m_{33}) & > & a_{13}a_{31}\\
	\gamma_3(\lambda^*-m_{11})(\lambda^*-m_{22}) & > & a_{12}a_{21}
	\end{array}
	\end{equation}
	for some $\gamma_1,\gamma_2,\gamma_3>0$ with $\gamma_1+\gamma_2+\gamma_3\le 1$.
	Consider
	\begin{multline*}
	g'(\lambda)=(\lambda-m_{22})(\lambda-m_{33})+(\lambda-m_{11})(\lambda-m_{33})\\
	+(\lambda-m_{11})(\lambda-m_{22})-a_{23}a_{32}-a_{13}a_{31}-a_{12}a_{21}.
	\end{multline*}
	We have $g'(\lambda)>0$ for $\lambda\in(-\infty,-\lambda^*]\cup[\lambda^*,+\infty)$. Noticing that
	\begin{multline*}
	g(-\lambda^*)\le -(\lambda^*+m_{11})(\lambda^*+m_{22})(\lambda^*+m_{33})\\
	+a_{23}a_{32}(1+m_{11})+a_{13}a_{31}(\lambda^*+m_{22})\\
	+a_{12}a_{21}(\lambda^*+m_{33})< 0,
	\end{multline*}
	all real roots of $g(\lambda)=0$ lie in the interval $(-\lambda^*,\lambda^*)$. By the Perron-Frobenius theorem, $\rho_M$ is an eigenvalue of $M$. We conclude that $\rho_M<\lambda^*$.
	
\end{document}